\allowdisplaybreaks \numberwithin{equation}{section}
\theoremstyle{plain}
\newtheorem{theorem}{Theorem}[section]
\newtheorem{proposition}[theorem]{Proposition}
\newtheorem{lemma}[theorem]{Lemma}
\newtheorem{corollary}[theorem]{Corollary}
\theoremstyle{definition}
\newtheorem{remark}[theorem]{Remark}
\renewcommand{\d}{\mathrm d}
\newcommand{\dx}{{\mathrm d}x}
\newcommand{\dy}{{\mathrm d}y}
\renewcommand{\dh}{{\mathrm d}\mathcal H^{1}}
\newcommand{\dhn}{{\mathrm d}\mathcal H^{d-1}}
\newcommand{\e}{\epsilon}
\newcommand{\re}{\mathbb R}
\renewcommand{\O}{\Omega }
\newcommand{\mint}{-\hskip -1.06 em \int}
\title{On two functionals involving the maximum of the torsion function}
\author{A. Henrot, I. Lucardesi \and G. Philippin}
\begin{document}

\maketitle

\begin{abstract}
In this paper we investigate upper and lower bounds of two shape functionals involving the maximum of the torsion function. More precisely, we consider $T(\O)/(M(\O)|\O|)$ and $M(\O)\lambda_1(\O)
$, where $\O$ is a bounded open set of $\re^d$ with finite Lebesgue measure $|\O|$, $M(\O)$ denotes the maximum of the torsion function, $T(\O)$ the torsion, and $\lambda_1(\O)$ the first Dirichlet eigenvalue. Particular attention is devoted to the subclass of convex sets.
\end{abstract}

\medskip

Keywords: torsional rigidity, first Dirichlet eigenvalue, shape optimization.

2010 MSC: 35P15, 49R05, 35J25, 35B27, 49Q10.

\section{Introduction}
The two most classical (and most studied) elliptic PDEs are probably the {\it torsion problem}, also known as {\it St-Venant problem}, and the Dirichlet eigenvalue
problem, see \eqref{torsion} and \eqref{eigen} below. Many estimates and qualitative properties have been obtained for these 
classical problems, see for example works by G. P\'olya, G. Szeg\"o, M. Schiffer, L. Payne, J. Hersch, C. Bandle and many others.
In this paper, following these former works, we are interested in finding bounds (if possible optimal) for
quantities involving the maximum of the torsion function.  We have been particularly inspired by two recent works in
\cite{vdBBB} and \cite{Naples}, where the ratio $T(\Omega)\lambda_1(\Omega)/|\Omega|$ has been investigated in a similar way. Here $T(\Omega)$
denotes the torsion, $\lambda_1(\Omega)$ the first Dirichlet eigenvalue, and $|\Omega|$ the volume of $\O$, see Section \ref{sec1.2} for the precise definitions.

\medskip
Le $M(\Omega)$ be the maximum of the torsion function. In this paper
we investigate upper and lower bounds for the shape functionals
\begin{eqnarray}
& F(\O ) : =&\displaystyle{\frac{T(\O )}{M(\O ) |\O |}\,,}\label{def-F}
\\
& G(\O)  := &\displaystyle{M(\Omega)\lambda_1(\Omega)\,,}\label{def-G}
\end{eqnarray}
defined over the bounded open sets $\Omega$ of $\re^d$ with finite Lebesgue measure.
In Section 2, we prove that the obvious upper bound $F(\Omega) \leq 1$ is actually sharp. Then, we show that for
convex domains we have indeed $F(\Omega)\leq 2/3$ and we give more precise lower and upper bounds for regular plane convex domains
in terms of the curvature of their boundaries. In Section 3, we consider the functional $G$. We prove that the easy lower
bound $G(\Omega) \geq 1$ is actually sharp. For convex domains, we recall the lower bound $G(\Omega) \geq \pi^2/8$ obtained
by L. Payne. Finding the optimal upper bound for $G$ seems much more difficult. Using topological derivatives, we prove that no maximizer exists in a wide class of domains.
When we restrict to the class of convex domains,
we can prove existence of an optimal domain but we cannot identify it. In the plane, we suspect that it is the equilateral
triangle (which is definitely better than the disk). At last, we write the shape derivative of $G$ and prove that the
equilateral triangle does not cancel this shape derivative, in other words it is not a critical point among all regular open sets.

\subsection{Notations}
We adopt standard notations for Lebesgue and Sobolev spaces on a bounded open set of $\re^d$, for example $L^2(\Omega)$
and $H^1(\Omega)$ (space of functions in $L^2$ whose derivative, in the sense of distributions, are still in $L^2$). 
The boundary values of a Sobolev function are always intended in the sense of traces. The $(d-1)$-dimensional Hausdorff measure is denoted by $\mathcal H^{d-1}$. 

Given a bounded open set $\Omega\subset \re^d$, we denote by $|\O|$ its Lebesgue measure, by $\mint_\O$ the average integral over it, and by  $\mathcal D(\Omega)$ the space of $\mathcal C^{\infty}$ functions having compact support contained into $\Omega$. The closure
of $\mathcal D(\Omega)$  in $H^1(\Omega)$ is denoted by $H^1_0(\Omega)$. If the open set $\Omega$ has Lipschitz boundary, we denote by $n$ the outer unit normal vector to $\partial\Omega$, defined a.e.\ on the boundary.

Given a point $x\in \mathbb R^d$ and a positive parameter $r>0$, we denote by $B_r(x)$ the ball of radius $r$ centered in $x$, and with $\overline{B}_r(x)$ its closure.

We define the minimal width of a set as the minimal distance between two parallel supporting hyperplanes.
 
We denote by $f^+$ the positive part of a scalar function $f$, namely $f^+(x):=\max\{ f(x),0\}$.

The partial derivative of a scalar function $f$ defined in $\re^d$ with respect to the $i$-th variable is denoted either by $\partial f/\partial x_i$ or by $f_{,i}$; the same notation is used for higher order partial derivatives.

We adopt the convention of summation over repeated indices.

\subsection{First properties}\label{sec1.2}

Given a bounded open set $\O$ of $\re^d$ with finite Lebesgue measure, we denote by $u_\O $ the \textit{torsion function} of $\Omega$, that is, the solution of
\begin{equation}\label{torsion}
\left\{\begin{array}{lll}
-\Delta u = 1 \quad &  \hbox{in }\O
\\
u\in H^1_0(\O )\,,\end{array}
\right.
\end{equation}
and we set 
\begin{equation}\label{def-TM}
T(\O ):= \| u_\O \|_{L^1(\O )}\,,\quad M(\O ):= \|u_\O \|_{L^\infty(\O )}\,.
\end{equation}

It is easy to check that $u_\O $ is $C^\infty$ inside $\Omega$ and non negative in $\overline \O $, thus
$$
T(\O )= \int_\O  u_\O \, \dx \,,\quad M(\O ) = \max_{\O }  u_\O \,.
$$
We denote by $\lambda_1(\O)$ the first eigenvalue of the Dirichlet Laplacian and by $\varphi_\O$ the corresponding (normalized) eigenfunction, that is, the solution of
\begin{equation}\label{eigen}
\left\{\begin{array}{lll}
-\Delta \varphi = \lambda_1(\O)\varphi \quad   \hbox{in }\O
\\
\varphi \in H^1_0(\O )\,,\end{array}
\right.
\end{equation}
with $\|\varphi_\O\|_{L^2(\O)}=1$.

We recall that the functionals $T$ and $\lambda_1$ admit the following variational formulations:
\begin{equation}\label{varform}
T(\O) = \sup_{v\in H^1_0(\O)\setminus\{0\}} \frac{\Big(\int_\O v\, \dx\Big)^2}{\int_\O |\nabla v|^2\, \dx}\,,\quad \lambda_1(\O) =\inf_{v\in H^1_0(\O)\setminus\{0\}}  \frac{\int_\O |\nabla v|^2\, \dx}{\int_\O v^2\, \dx}\,.
\end{equation}

It follows from the homogeneity relations
$$
T(t\O) = t^{d+2}T(\O)\,,\quad M(t\O) = t^2 M(\O)\,,\quad \lambda_1(t\O) = t^{-2}\lambda_1(\O)\,,\quad t>0\,,
$$
that both $F$ and $G$ are scale invariant.

In the sequel, when no ambiguity may arise, we will denote the torsion function and the first eigenfunction of the Dirichlet Laplacian of a given set $\O$ simply by $u$ and $\varphi$, respectively.

\section{Bounds for the functional $F$}

\subsection{The upper bound}\label{upF}
The upper bound $F(\O)\leq 1$ is obvious. Actually, we are going to prove that this bound is sharp.
This is not so intuitive since the equality $F(\O)=1$ is only true for constant functions and clearly a torsion
function of any domain $\O$ is {\it a priori} far to be constant. The idea is to use the theory of homogenization.
Indeed, by performing suitable spherical holes (with the appropriate radius) in a domain $\O$, we are able to get a sequence of torsion
functions which $\gamma$-converges to something which is no longer a torsion function : the ``strange term coming from nowhere'' in the celebrated paper by D. Cioranescu and F. Murat, \cite{Cio-Mur}. Our theorem is the following.
\begin{theorem}\label{sharp upper bound F}
In any dimension, we can find a sequence of domains $\O_\e$ such that $F(\O_\e) \to 1$.
\end{theorem}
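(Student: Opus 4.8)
The plan is to realise the value $1$ as a limit by drilling a critical family of tiny balls into a fixed domain, in the spirit of Cioranescu--Murat \cite{Cio-Mur}. Fix $d\ge 2$, a bounded smooth domain $\Omega\subset\re^d$ and a constant $C_0>0$. For $\e>0$ remove from $\Omega$ the balls of radius $r_\e$ centred at the nodes of the lattice $\e\mathbb Z^d$ that lie well inside $\Omega$, where $r_\e$ is the critical radius ($r_\e=C_0\,\e^{d/(d-2)}$ if $d\ge 3$, and $r_\e=\exp(-C_0/\e^2)$ if $d=2$), and call $\Omega_\e$ the perforated set. Writing $u_\e:=u_{\Omega_\e}$, first I would record two elementary facts: the total volume of the holes vanishes, so $|\Omega_\e|\to|\Omega|$; and, by the Cioranescu--Murat theorem applied with right-hand side $f\equiv 1$, the functions $u_\e$ (extended by $0$ inside the holes) converge weakly in $H^1_0(\Omega)$, hence strongly in $L^2(\Omega)$, to the solution $u_\mu$ of
\[
-\Delta u_\mu+\mu\,u_\mu=1\ \text{ in }\Omega,\qquad u_\mu\in H^1_0(\Omega),
\]
where $\mu=\mu(C_0,d)>0$ is the strange term (and $\mu$ ranges over all of $(0,\infty)$ as $C_0$ does). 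In particular $T(\Omega_\e)=\int_\Omega u_\e\to\int_\Omega u_\mu=:T_\mu$ by the $L^1$ convergence.

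Next I would analyse the homogenised problem as $\mu\to\infty$. The constant $1/\mu$ is a supersolution, so the maximum principle gives $0\le u_\mu\le 1/\mu$, whence $\mu M_\mu\le 1$ with $M_\mu:=\max_\Omega u_\mu$. On the other hand $\mu u_\mu\to 1$ a.e.\ and boundedly (a singular perturbation, with a boundary layer of width $\sim\mu^{-1/2}$), so dominated convergence yields $\mu T_\mu\to|\Omega|$, while evaluating at an interior point gives $\mu M_\mu\to 1$. Thus the ``homogenised value'' $T_\mu/(M_\mu|\Omega|)$ tends to $1$ as $\mu\to\infty$.

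The delicate point is to transfer this to the genuine shape functional, i.e.\ to control $M(\Omega_\e)=\|u_\e\|_{L^\infty}$, since homogenisation provides only weak $H^1$ convergence and the $L^\infty$ norm is not weakly continuous. The lower bound is free, $M(\Omega_\e)\ge T(\Omega_\e)/|\Omega_\e|\to T_\mu/|\Omega|$, which merely recovers $F(\Omega_\e)\le 1$; what is needed is the matching upper estimate $\limsup_\e M(\Omega_\e)\le(1/\mu)(1+\delta_\mu)$ with $\delta_\mu\to0$. I expect this to be the main obstacle. The strategy I would use is a cell-by-cell barrier argument: since $u_\e$ is superharmonic between the holes, a comparison on each period cell (of side $\e$, carrying a single central hole) shows that the maximum of $u_\e$ on that cell exceeds its values on the inter-hole skeleton by at most the $O(\e^2)$ contribution of the unit source, reducing matters to a uniform bound on the skeleton. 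There, up to the capacity-potential corrector $w_\e$ (which satisfies $0\le w_\e\le 1$), $u_\e$ is comparable to the macroscopic profile $u_\mu\le 1/\mu$, which yields the claim.

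Finally I would assemble the estimates. For each fixed $\mu$, combining $T(\Omega_\e)\to T_\mu$, $|\Omega_\e|\to|\Omega|$ and the upper bound on $M$ gives
\[
\liminf_{\e\to 0} F(\Omega_\e)\ \ge\ \frac{\mu\,T_\mu}{(1+\delta_\mu)\,|\Omega|},
\]
whose right-hand side tends to $1$ as $\mu\to\infty$, since $\mu T_\mu/|\Omega|\to 1$ and $\delta_\mu\to 0$. As $F\le 1$ always, a diagonal extraction over a sequence $\mu=\mu_k\to\infty$ and $\e=\e_k\to 0$ then produces a single family of domains along which $F\to 1$, proving the theorem.
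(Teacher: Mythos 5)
Your overall architecture coincides with the paper's: the same Cioranescu--Murat perforation, the same homogenized limit $-\Delta u_\mu + \mu u_\mu = 1$, the same maximum-principle bound $0 \le u_\mu \le 1/\mu$, and the same final double limit ($\e \to 0$ first, then $\mu \to \infty$). The genuine gap sits precisely at the step you yourself flag as the main obstacle: the asymptotic upper bound $\limsup_{\e} M(\O_\e) \le (1+\delta_\mu)/\mu$. Your cell-by-cell reduction is fine (on each period cell, comparison with the harmonic replacement plus the cell's own torsion function gives $\max_{\mathrm{cell}} u_\e \le \max_{\mathrm{skeleton}} u_\e + O(\e^2)$), but the skeleton estimate that should finish the job --- ``up to the corrector $w_\e$, $u_\e$ is comparable to the macroscopic profile $u_\mu \le 1/\mu$'' --- has no justification. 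The convergences provided by Cioranescu--Murat \cite{Cio-Mur} and Kacimi--Murat \cite{Kac-Mur} are weak-$H^1$/strong-$L^2$, and their corrector theorems are stated in energy norms; none of this controls $u_\e$ pointwise on a Lebesgue-null set such as the skeleton. What your step requires is in effect an $L^\infty$ error estimate for this homogenization problem, which is not in the cited literature and is exactly the difficulty one must overcome; as written, the argument assumes closeness to $u_\mu$ in the very norm you are trying to estimate.

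The paper closes this gap by a different, purely integral device (Theorem \ref{convergence of the norm}). One invokes the inequality, valid for any $v$ with $\Delta v + 1 \ge 0$ in $\O$ (\cite[Proposition 3.2.34]{Vel}),
\begin{equation}
\|v\|_{L^\infty(\O)} \le C \Big( \int_\O v \, \dx \Big)^{d/(d+2)},
\end{equation}
and applies it to $v = (u_\e - b)^+$. If one had $\liminf_\e \|u_\e\|_{L^\infty} > \|u_\mu\|_{L^\infty}$, choosing $b$ strictly in between would keep the left-hand side bounded away from zero, while $\int_\O (u_\e - b)^+ \, \dx \to \int_\O (u_\mu - b)^+ \, \dx = 0$ by strong $L^2$ convergence --- a contradiction. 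Hence $\|u_\e\|_{L^\infty} \to \|u_\mu\|_{L^\infty} \le 1/\mu$ along a subsequence, with no pointwise analysis near the holes at all. To repair your proof you should either substitute this $L^1$-to-$L^\infty$ argument for your skeleton step, or genuinely prove a uniform ($L^\infty$) corrector estimate, which is a much heavier task. A minor additional remark: your claim that $\mu u_\mu \to 1$ a.e.\ via a boundary-layer analysis is plausible but unproven; the paper obtains $\mu \int_\O u_\mu \, \dx \to |\O|$ from the weaker and elementary fact that $\mu u_\mu \rightharpoonup 1$ weakly in $L^2(\O)$, which is all that is needed.
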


Here we recall the construction of a sequence of perforated domains introduced by Cioranescu-Murat in \cite{Cio-Mur},
see also \cite{Kac-Mur} for a more precise estimate and convergence result. 

Let $\O \subset \re^d$, $d\geq 2$ a regular (or a convex) domain, and $C_0>0$ be fixed. For every $\e>0$, consider the ball $T_\e:= B_{r_\e}(0)$ with a radius $r_\e$ which satisfies
\begin{equation}\label{re}
r_\e=\left\lbrace
\begin{array}{ccc}
  C_0 \e^{d/(d-2)} & \mbox{if} & d\geq 3 \\
  \exp(-C_0/\e^{2}) & \mbox{if} & d=2
\end{array}\right.
\end{equation}
and the perforated domain
\begin{equation}\label{oe}
\O _\e:= \O  \setminus \cup_{z\in \mathbb Z^d}(2\e z + \overline{T}_\e)\,.
\end{equation}
Note that the removed holes form a periodic set in the plane, with period $2\e$. Now let $u_\e$ denote the torsion function of the perforated domain $\O_\e$, extended to zero in the holes.
It is proved in \cite{Cio-Mur} that the sequence $u_\e$ converges weakly in $H^1_0(\O )$ (and strongly in $L^2(\O)$)
to the solution $u^*$ of
$$
\left\{\begin{array}{lll}
-\Delta u^* + a u^* = 1 \  &  \hbox{in }\O
\\
u^*\in H^1_0(\O )\,,\end{array}
\right.
$$
where the constant $a$ satisfies
\begin{equation}\label{constant-a}
    a=\left\lbrace
\begin{array}{ccc}
  \frac{C_0^{d-2}}{2^d} \,d(d-2) \omega_d & \mbox{if} & d\geq 3 \\
  \frac{\pi}{2 C_0} & \mbox{if} & d=2\,,
\end{array}\right. 
\end{equation}
and $\omega_d$ is the volume of the unit ball in $\re^d$.
As a consequence we have
\begin{equation}\label{torsion_volume}
    \int_{\O_\e} u_\e\, \dx \to \int_\O u^*\, \dx\,,\quad |\O_\e|\to |\O|\,,\quad \hbox{as }\e \to 0\,.
\end{equation}

\medskip
Now we want to analyze the asymptotic behavior of the $L^\infty$ norm of the functions $u_\e$. We cannot hope for uniform convergence of $u_\e$ to $u^*$, nevertheless
we can prove the convergence of the $L^\infty$ norms:
\begin{theorem}\label{convergence of the norm}
Let $u_\e$ be the torsion functions of the perforated domains $\Omega_\e$ extended to zero in the holes and let $u^*$ be their weak limit in $H^1_0(\O)$.\\
Then, up to a subsequence, $M(\O_\e)=\|u_\e\|_{L^\infty(\O)} \to \|u^*\|_{L^\infty(\O)}$ as $\e \to 0$.
\end{theorem}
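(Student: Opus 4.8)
The plan is to prove two matching one-sided estimates. Since $\O_\e\subset\O$, comparison for the torsion problem gives $0\le u_\e\le u_\O$, whence $M(\O_\e)\le M(\O)$; thus $M(\O_\e)$ is bounded, and it suffices to show that every limit $L$ of a convergent subsequence $M(\O_\e)\to L$ equals $M^*:=\|u^*\|_{L^\infty(\O)}$. Recall that $u^*$ is continuous on $\O$ and, vanishing on $\partial\O$ while being positive inside, attains $M^*$ at an interior point. I will prove $L\ge M^*$ and $L\le M^*$ separately, using throughout the strong $L^2$ convergence $u_\e\to u^*$ granted by the Cioranescu--Murat theorem.

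For the lower bound I would argue by localized averaging, which needs only the $L^2$ (hence $L^1$) convergence. Fix $x_0$ with $u^*(x_0)=M^*$ and, given $\delta>0$, a ball $B_\rho(x_0)\subset\O$ on which $u^*\ge M^*-\delta$ by continuity. Since $u_\e\le M(\O_\e)$ pointwise,
\[ M(\O_\e)\,|B_\rho| \ \ge\ \int_{B_\rho(x_0)} u_\e\,\dx \ \xrightarrow{\e\to0}\ \int_{B_\rho(x_0)} u^*\,\dx\ \ge\ (M^*-\delta)\,|B_\rho|, \]
so $L\ge M^*-\delta$; letting $\delta\to0$ yields $L\ge M^*$.

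The upper bound is the heart of the matter, and the key idea is to control the excess of $u_\e$ above the limiting maximum by a torsion argument. Set $p_\e:=(u_\e-M^*)^+$ and $A_\e:=\{u_\e>M^*\}$. Because $u_\e=0$ on the holes and $u_\e\le u_\O<M^*$ in a neighbourhood of $\partial\O$, the set $A_\e$ is open and compactly contained in the unperforated part of $\O$; there $-\Delta u_\e=1$, so $p_\e$ is exactly the torsion function of $A_\e$. The Saint-Venant (Talenti) comparison then gives $\|p_\e\|_{L^\infty}=M(A_\e)\le c_d|A_\e|^{2/d}$. It remains to show $|A_\e|\to0$: the $L^2$ convergence gives convergence in measure, and for every $\delta>0$ one has $|A_\e|\le|\{u^*>M^*-\delta\}|+|\{|u_\e-u^*|>\delta\}|$, the last term vanishing as $\e\to0$. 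Since $u^*$ solves a constant-coefficient elliptic equation with analytic right-hand side it is real-analytic and non-constant in $\O$, so its top level set $\{u^*=M^*\}$ is Lebesgue-null; letting $\delta\to0$ then forces $|A_\e|\to0$. Consequently $u_\e\le M^*+\|p_\e\|_{L^\infty}$ with $\|p_\e\|_{L^\infty}\to0$, giving $L\le M^*$, and altogether $M(\O_\e)\to\|u^*\|_{L^\infty(\O)}$.

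The main obstacle, as anticipated, is this upper bound: the convergence $u_\e\to u^*$ is not uniform (the $u_\e$ are pinned to zero on a dense family of shrinking holes), so one cannot merely pass to the limit in the maxima. The device that circumvents the lack of uniform convergence is the observation that the super-level set $A_\e$ is asymptotically negligible in measure---here the real-analyticity of $u^*$ is used precisely to annihilate the maximal level set---combined with the a priori bound $M(A_\e)\lesssim|A_\e|^{2/d}$, which upgrades smallness of $|A_\e|$ into smallness of $p_\e$ in $L^\infty$. I expect the only points requiring care to be the justification that $A_\e$ avoids the holes and $\partial\O$ (so that $p_\e$ is genuinely a torsion function) and the measure-zero claim for $\{u^*=M^*\}$.
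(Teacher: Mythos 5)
Your proof is correct, and although it shares the paper's overall skeleton --- separate lower and upper bounds, with the upper bound obtained by truncating $u_\e$ and converting an integral smallness into $L^\infty$ smallness --- the key tools for the upper bound are genuinely different. The paper takes a level $b$ strictly \emph{above} $\|u^*\|_{L^\infty(\O)}$ (and strictly below a hypothetically larger $\liminf_\e \|u_\e\|_{L^\infty(\O)}$) and applies to $v=(u_\e-b)^+$ an inequality of Velichkov valid for any subsolution, i.e.\ any $v$ with $\Delta v+1\ge 0$ in $\O$:
\begin{equation}
\|v\|_{L^\infty(\O)} \leq C \Big(\int_\O v\, \dx\Big)^{d/(d+2)}\,;
\end{equation}
since $b>\|u^*\|_{L^\infty(\O)}$, the strong $L^2$ convergence gives $\int_\O (u_\e-b)^+\dx\to\int_\O (u^*-b)^+\dx=0$ exactly, and a contradiction follows. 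You truncate instead \emph{exactly} at $M^*=\|u^*\|_{L^\infty(\O)}$, identify $(u_\e-M^*)^+$ as the torsion function of the super-level set $A_\e$, and invoke the Saint-Venant/Talenti comparison $M(A_\e)\le c_d|A_\e|^{2/d}$; this obliges you to prove $|A_\e|\to 0$, which requires the extra ingredient that $\{u^*=M^*\}$ is Lebesgue-null (real-analyticity of $u^*$, plus connectedness of $\O$ so that non-constancy propagates) --- a step the paper's choice $b>\|u^*\|_{L^\infty(\O)}$ elegantly sidesteps, together with all regularity questions about $A_\e$, since the subsolution property $\Delta v +1\geq 0$ holds on all of $\O$ and no torsion-function interpretation is needed. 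In exchange, your argument is direct rather than by contradiction, replaces the reference to Velichkov's estimate by classical symmetrization, and actually yields convergence of the \emph{full} sequence: both of your bounds use only the strong $L^2$ convergence, whereas the paper's lower bound extracts a subsequence to get pointwise a.e.\ convergence (whence the ``up to a subsequence'' in the statement). The two points you flag as delicate are indeed the ones to check, and they do go through: the zero-extension of $u_\e$ is continuous on $\overline\O$ because the holes are closed balls of positive radius, so every boundary point of $\O_\e$ is regular for the Dirichlet problem (this makes $A_\e$ open and disjoint from the holes and from a neighborhood of $\partial\O$, using also $M^*>0$); and $(u_\e-M^*)^+\in H^1_0(A_\e)$ follows by approximating with $(u_\e-M^*-\delta)^+$, whose support is compact in $A_\e$.
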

\begin{proof}
We are indebted to G. Buttazzo and B. Velichkov of this proof, after a discussion during a meeting in CIRM-Luminy, 21-25 November 2016.

\medskip
First of all, up to a subsequence, we can assume that $u_\e$ converges pointwise almost everywhere to $u^*$:
$$\mbox{for a.e. } x\in \O\,, \quad  u_\e(x) \to u^*(x)\,.$$
Applying this to a ball centered at a point where $u^*$ is maximum, we infer that
\begin{equation}\label{max1}
    \|u^*\|_{L^\infty(\O)} \leq \liminf_\e \|u_\e\|_{L^\infty(\O)}\,. 
\end{equation}
Now let us assume that the inequality in \eqref{max1} is strict. Then we could find two positive numbers $b_1<b_2$ such that
$$\|u^*\|_{L^\infty(\O)} \leq b_1 < b_2 \leq \liminf_\e \|u_\e\|_{L^\infty(\O)}\,.$$
It is proved in \cite[Proposition 3.2.34]{Vel} that for any function $v$ satisfying $\Delta v +1 \geq 0$ in $\O$, the following inequality holds:
\begin{equation}\label{max2}
    \|v\|_{L^\infty(\O)}  \leq C \left(\int_\O v(x)\, \dx\right)^{d/(d+2)},
\end{equation}
where $C$ is a positive constant which only depends on $\O$. From \eqref{max2} with $v:=(u_\e -b)^+$ and $b=(b_1+b_2)/2$, we obtain
\begin{equation}\label{max3}
    \|(u_\e -b)^+\|_{L^\infty(\O)}  \leq C \left(\int_\O (u_\e -b)^+\,\dx\right)^{d/(d+2)}.
\end{equation}
It comes on the one hand
$$\liminf_\e \|(u_\e -b)^+ \|_{L^\infty(\O)} \geq b_2-b= (b_2-b_1)/2 >0\,,$$
while on the other hand, by $L^2$ convergence,
$$\int_\O (u_\e-b)^+ \,\dx \to \int_\O (u^*-b)^+ \,\dx =0\,,$$
contradicting inequality \eqref{max3}.
\end{proof}

\medskip
Now we are in position to prove Theorem \ref{sharp upper bound F}. Let us introduce $v^*:= au^*$.
In the following Lemma, we list some properties of this function.
\begin{lemma}
Let $a\in \re^+$ and $v^*$ be the solution of
\begin{equation}\label{pde-v*}
- a^{-1} \Delta v^* + v^* = 1
\end{equation}
in $H^1_0(\O )$. Then $0< v^*\leq 1$ in $\Omega$ and, in the limit as $a\to +\infty$, $v^*\rightharpoonup 1$ weakly in $L^2(\O)$.
\end{lemma}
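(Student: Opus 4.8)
The plan is to establish the two-sided pointwise bound by the classical truncation (Stampacchia) method, and then to identify the weak limit by testing the equation against smooth compactly supported functions. First I would record that, by Lax--Milgram, the bilinear form $(v,w)\mapsto a^{-1}\int_\O \nabla v\cdot\nabla w\,\dx + \int_\O vw\,\dx$ is continuous and coercive on $H^1_0(\O)$, so $v^*$ exists, is unique, and satisfies the weak formulation
\begin{equation}
a^{-1}\int_\O \nabla v^*\cdot\nabla\phi\,\dx + \int_\O v^*\phi\,\dx = \int_\O \phi\,\dx \qquad \forall\,\phi\in H^1_0(\O)\,.
\end{equation}

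To get $v^*\geq 0$ I would insert the admissible test function $\phi=\min\{v^*,0\}\in H^1_0(\O)$: the gradient term becomes $a^{-1}\int_{\{v^*<0\}}|\nabla v^*|^2\,\dx\geq 0$, the zeroth-order term becomes $\int_{\{v^*<0\}}(v^*)^2\,\dx\geq 0$, while the right-hand side equals $\int_\O \min\{v^*,0\}\,\dx\leq 0$; hence both nonnegative terms vanish and $v^*\geq 0$ a.e. Symmetrically, testing with $\phi=(v^*-1)^+\in H^1_0(\O)$ turns the identity into
\begin{equation}
a^{-1}\int_{\{v^*>1\}}|\nabla v^*|^2\,\dx + \int_\O \big[(v^*-1)^+\big]^2\,\dx = 0\,,
\end{equation}
forcing $(v^*-1)^+=0$, i.e. $v^*\leq 1$ a.e. Finally, since $1-v^*\geq 0$ we have $-\Delta v^*=a(1-v^*)\geq 0$, so $v^*$ is superharmonic; elliptic regularity makes it smooth inside $\O$, and the strong maximum principle rules out an interior zero (otherwise $v^*\equiv 0$, contradicting the equation), giving the strict inequality $v^*>0$ in $\O$.

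For the asymptotics, the uniform bound $0\leq v^*\leq 1$ shows that $\{v^*\}_a$ is bounded in $L^2(\O)$, so every sequence $a_n\to+\infty$ admits a subsequence along which $v^*\rightharpoonup g$ weakly in $L^2(\O)$, with $0\leq g\leq 1$. Taking $\phi\in\mathcal D(\O)$ and integrating the gradient term by parts, the weak formulation reads $-a^{-1}\int_\O v^*\Delta\phi\,\dx + \int_\O v^*\phi\,\dx = \int_\O \phi\,\dx$; the first term is bounded by $a^{-1}\sqrt{|\O|}\,\|\Delta\phi\|_{L^2(\O)}\to 0$, so passing to the limit yields $\int_\O g\phi\,\dx=\int_\O \phi\,\dx$ for every $\phi\in\mathcal D(\O)$. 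By density of $\mathcal D(\O)$ in $L^2(\O)$ this forces $g=1$, and since every subsequential limit equals $1$, the full family converges: $v^*\rightharpoonup 1$ weakly in $L^2(\O)$.

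I do not anticipate any serious obstacle: the statement is essentially a maximum-principle computation followed by a soft compactness argument. The only points requiring a little care are checking that the truncations $\min\{v^*,0\}$ and $(v^*-1)^+$ are genuinely admissible in $H^1_0(\O)$ (which follows from $v^*\in H^1_0(\O)$ together with the chain rule for a Lipschitz function vanishing at $0$ composed with a Sobolev map), and making sure the weak limit is identified for the whole family rather than a single subsequence, which is automatic once the subsequential limit is seen to always equal $1$.
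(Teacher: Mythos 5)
Your proof is correct, but it follows a genuinely different route from the paper's. For the pointwise bounds, the paper argues classically: positivity is quoted as a consequence of the maximum principle, and the bound $v^*\leq 1$ comes from evaluating the equation at an interior maximum point $x_0$, where $\Delta v^*(x_0)\leq 0$ forces $v^*(x_0)\leq 1$. Your Stampacchia truncation with the test functions $\min\{v^*,0\}$ and $(v^*-1)^+$ reaches the same conclusion purely variationally, without invoking interior smoothness of $v^*$ or the attainment of an interior maximum; this makes the argument robust for arbitrary bounded open sets and is arguably more self-contained, at the price of being longer (you also take extra care with strict positivity via the strong maximum principle, which the paper leaves implicit). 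For the asymptotics, the paper compares the energy $\frac12\int_\O a^{-1}|\nabla v|^2\,\dx+\int_\O(v^2-v)\,\dx$ of $v^*$ with that of the zero function to extract the two uniform bounds $\|v^*\|_{L^2}\leq C$ and $\|a^{-1/2}\nabla v^*\|_{L^2}\leq C$, and then kills the gradient term in the weak formulation by writing $a^{-1}\int_\O\nabla v^*\cdot\nabla\phi\,\dx = a^{-1/2}\big(a^{-1/2}\int_\O\nabla v^*\cdot\nabla\phi\,\dx\big)\to 0$. You instead integrate by parts onto $\phi\in\mathcal D(\O)$, so that the uniform bound $0\leq v^*\leq 1$ alone suffices and no gradient estimate is needed at all; this is slightly softer and cleaner, whereas the paper's energy bound carries a bit more quantitative information (the scaled gradient control), though nothing in the Lemma requires it. Both identifications of the limit, and your Urysohn-type upgrade from subsequential to full convergence, are sound.
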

\begin{proof}
 The positivity of $v^*$ in $\O$ is a simple consequence of the maximum principle. 
 For $x_0\in \O $ maximum point for $v^*$ it holds $\Delta v^*(x_0)\leq 0$. In particular, for every $x\in \O$ we have
$$
v^*(x)\leq v^*(x_0)\leq -\Delta v^*(x_0) + v^*(x_0) = 1\,,
$$
which proves the upper bound.
Exploiting the optimality of $v^*$ for the functional
\begin{equation}\label{argmin2}
H^1_0(\O) \ni v \mapsto \frac12 \int_\O a^{-1} |\nabla v|^2\,\dx + \int_\O (v^2  - v)\, \dx\,,
\end{equation}
it is easy to see that $v^*$ and $a^{-1/2}\nabla v^*$ are uniformly (with respect to $a$) bounded in $L^2(\O)$ and $L^2(\O;\re^2)$, respectively. The former bound implies that, in the limit as $a\to +\infty$, up to subsequences, $v^*$ weakly converges in $L^2(\O)$ to some $\overline{v}^*$. The latter bound, combined with \eqref{pde-v*}, implies that the weak limit $\overline{v}^*$ is $1$.
\end{proof}

An immediate consequence of the previous Lemma is 
\begin{equation}\label{max4}
    \liminf_\e \|u_\e\|_{L^\infty(\O)} = \|u^*\|_{L^\infty(\O)} \leq \frac{1}{a}\,.
\end{equation}
Therefore, using \eqref{torsion_volume} and \eqref{max4}, we get, for a subsequence:
$$
\limsup_\e F(\O_\e) \geq 
\frac{\int_\O u^*\, \dx }{(\max u^*)|\O|} = \frac{\int_\O v^*\, \dx }{(\max v^*)|\O|} \geq  \frac{\int_\O v^*\, \dx }{|\O|} \to 1\,,
$$
as $a\to +\infty$. This finishes the proof of Theorem \ref{sharp upper bound F}.

\subsection{The upper bound for convex sets}
The maximizing sequence used in the previous section is very specific, thus we can expect
that in the convex case we can significantly improve the upper bound. Indeed, let us prove the following
\begin{theorem}\label{theoubconvex}
Let $\O$ be any bounded convex domain in $\re^d$, then
\begin{equation}\label{upbFconv}
F(\O)\leq \frac{2}{3}\,.
\end{equation}
Moreover, inequality \eqref{upbFconv} is sharp.
\end{theorem}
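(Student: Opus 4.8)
The plan is to deduce \eqref{upbFconv} from a pointwise gradient estimate for the torsion function that is special to convex sets, and then to exhibit a degenerating slab-like family of convex domains along which $F$ tends to $2/3$. The crucial ingredient is the Payne-type $P$-function bound
\begin{equation}\label{gradest}
|\nabla u|^2 \le 2\big(M(\O) - u\big) \quad \text{in } \O,
\end{equation}
valid for every bounded convex $\O$, where $u=u_\O$ denotes the torsion function. Granting \eqref{gradest}, the theorem follows by one integration: testing $-\Delta u = 1$ with $u$ gives $\int_\O |\nabla u|^2\,\dx = \int_\O u\,\dx = T(\O)$, whence
\begin{equation}\label{Fint}
T(\O) = \int_\O |\nabla u|^2\,\dx \le \int_\O 2\big(M(\O)-u\big)\,\dx = 2 M(\O)|\O| - 2\,T(\O),
\end{equation}
so that $3\,T(\O) \le 2\,M(\O)|\O|$, i.e.\ $F(\O)\le 2/3$. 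Note that the constant $2/3$ (rather than the dimension-dependent value one would get from the more familiar subharmonic $P$-function $|\nabla u|^2+\tfrac2d u$) hinges on the factor $2u$, not $\tfrac2d u$, in \eqref{gradest}.

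To prove \eqref{gradest} I would set $P := |\nabla u|^2 + 2u$ and show that $\max_{\overline\O} P = 2M(\O)$, the maximum being attained at a critical point of $u$, where $\nabla u = 0$ and $u = M(\O)$. Using $\Delta u = -1$ one computes $\Delta P = 2|\nabla^2 u|^2 - 2$, and the standard $P$-function computation shows that on the open set $\{\nabla u \ne 0\}$ the function $P$ is a subsolution of a (possibly degenerate) elliptic operator $a_{ij}P_{,ij} + b_i P_{,i}\ge 0$, so by the strong maximum principle $P$ has no interior maximum there except at critical points of $u$. It remains to exclude a boundary maximum, and this is exactly where convexity enters. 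On $\partial\O$ we have $u=0$ and $\nabla u = -|\nabla u|\,n$; decomposing the Laplacian into its normal and tangential parts with $H$ the mean curvature of $\partial\O$ gives $u_{,ij}n_i n_j = -1 + H|\nabla u|$, and substituting into $\partial P/\partial n = 2u_{,k}u_{,ki}n_i + 2 u_{,i}n_i$ yields
\begin{equation}\label{Pboundary}
\frac{\partial P}{\partial n} = -2H\,|\nabla u|^2 \le 0 \quad \text{on } \partial\O,
\end{equation}
since $H \ge 0$ for a convex set. By Hopf's lemma $P$ cannot attain a strict maximum on the boundary, so its maximum is located at a critical point of $u$, where $P = 2u \le 2M(\O)$; this is \eqref{gradest}.

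For sharpness I would test the functional on a slab-like family, for instance the convex boxes $\O_\delta := (-1,1)^{d-1}\times(-\delta,\delta)$ (or smooth strictly convex approximations of them) as $\delta \to 0^+$. In the thin direction the torsion function is asymptotically the one-dimensional parabola $u \approx \tfrac12(\delta^2 - x_d^2)$, for which $M(\O_\delta)\sim \delta^2/2$, $\int_{\O_\delta}u\,\dx \sim 2^{d-1}\cdot \tfrac23\,\delta^3$ and $|\O_\delta| = 2^{d-1}\cdot 2\delta$, so that $F(\O_\delta)\to 2/3$. Since a genuine bounded convex set is never an exact slab, one in fact has $F(\O)<2/3$ strictly, but the constant $2/3$ cannot be lowered, which is what sharpness means here.

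I expect the main obstacles to be two. First, in \eqref{gradest} the elliptic differential inequality for $P$ must be made rigorous across the critical set $\{\nabla u=0\}$, and the boundary argument \eqref{Pboundary} (together with Hopf's lemma) must be justified for a merely convex, possibly non-smooth, boundary; the natural remedy is to prove the estimate first for smooth, strictly convex $\O$ — where $u$ has a unique nondegenerate maximum and $\partial\O$ is $\mathcal C^2$ with $H\ge 0$ — and then pass to general convex $\O$ by inner approximation, using continuity of $T$, $M$ and $|\O|$ along the approximation. Second, the sharpness computation requires controlling the thin-domain asymptotics with its edge effects; this can be handled either by the explicit one-dimensional comparison built into the variational characterizations \eqref{varform} or by a standard dimension-reduction estimate, which I would only sketch rather than carry out in full.
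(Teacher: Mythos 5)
Your proposal follows essentially the same route as the paper: the upper bound rests on the identical Payne $P$-function estimate $|\nabla u|^2 + 2u \le 2M(\O)$ (which the paper cites from Payne and Philippin--Safoui rather than reproving, as you sketch), integrated over $\O$ to give $3T(\O)\le 2M(\O)|\O|$, and your thin slabs $(-1,1)^{d-1}\times(-\delta,\delta)$ are, by scale invariance, exactly the paper's elongating boxes $(-n,n)^{d-1}\times(0,1)$. The only difference is one of completeness: the paper carries out the sharpness step rigorously via the maximum-principle comparison $M(\O_n)\le 1/8$ and an explicit cut-off test function in the variational formulation of $T$ --- precisely the two tools you name but leave as a sketch.
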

\begin{proof}
We use the maximum principle for $P$-functions. Following \cite{Pay1} (for regular convex domains) or \cite{Phi-Saf}
(for general convex domains), it is known that the function $\psi:=|\nabla u|^2 +2u$ takes its maximum at a critical
point of $u$, namely at the point where $u$ is maximum (since by strict concavity of $\sqrt{u}$, see e.g.\ \cite{Kor-Le}, it has only one critical point). Therefore, for every $x\in \O$, we have
\begin{equation}\label{P1}
|\nabla u(x)|^2 +2u(x) \leq 2M(\O)\,;
\end{equation}
in particular, integrating \eqref{P1} over $\O$ yields $3 T(\O) \leq 2 M(\O) |\O|$.

\medskip
In order to prove the sharpness of the inequality, let us consider the sequence of rectangles in the plane $\O_n:=(-n,n)\times (0,1)$.
The same construction holds in any dimension $d$, using the sequence of parallelepipeds $\O_n:=(-n,n)^{d-1}\times (0,1)$.
Let us denote by $u_n$ the torsion function of $\O_n$. By the maximum principle, we have
\begin{equation}\label{P2}
    u_n(x,y)\leq \frac{1}{2}\, y(1-y)\,.
\end{equation}
The function $\frac{1}{2}\, y(1-y)$ can be seen as the torsion function of the unbounded strip $\{0<y<1\}$.
Therefore \eqref{P2} implies that $M(\O_n)\leq \frac{1}{8}\,$.\\
Now, in view of \eqref{torsion}, it is easy to check that the torsion admits the variational formulation
\begin{equation}\label{P3}
    -\frac{1}{2}\, T(\O_n) =\min_{v\in H^1_0(\O_n)} \left\{ \frac{1}{2}\,\int_{\O_n} |\nabla v|^2\,\dx - \int_{\O_n}  v\,\dx\right\}\,.
\end{equation}
Let us introduce the function $\psi_n(x)$ defined as
\begin{equation}\label{psin}
\psi_n(x):=\left\lbrace\begin{array}{cll}
                          1  & \quad \mbox{if }&   x\in [-n+1,n-1]  \\
                          n-x & \quad \mbox{if }&  x\in [n-1,n] \\
                          x-n & \quad \mbox{if }  & x\in [-n,-n+1]\\
                          0 & \quad \mbox{if }  & |x|>n 
                        \end{array}\right.
\end{equation}
and let us choose as a test function in \eqref{P3} the function $v(x,y):=\psi_n(x) \frac{1}{2}\, y(1-y)$, which is an element of $H^1_0(\O_n)$.
We immediately get
$$\int_{\O_n}  v\,\dx = \int_{-n}^n \psi_n(x)\,\dx \int_0^1 \frac{1}{2}\, y(1-y)\,\dy = \frac{n}{6}-\frac{1}{12}\geq \frac{n}{6} - 1\,.$$ 
Since $|\psi_n'(x)|=1$ if $x\in (-n,-n+1)\cup(n-1,n)$ and it is 0 otherwise, and $y(1-y)/2 < 1$ for every $y\in (0,1)$, we have 
$$
\int_{\O_n} |\nabla v|^2\,\dx  \leq 2 +  \int_{-n}^n \psi_n^2(x)\,\dx \int_0^1 (\frac{1}{2}\, -y)^2\,\dy = 2+ \frac{n}{6} - \frac{1}{9}\leq \frac{n}{6} + 2\,.
$$ 
Thus
$$-\frac{1}{2}\, T(\O_n) \leq -\frac{n}{12} + 2\,,$$
which implies that
$$F(\O_n)\geq\frac{2}{3} - \frac{16}{n}\,\to 2/3\quad \mbox{when } n\to +\infty\,.$$
\end{proof}
For strictly convex and regular domains in the plane, one can improve this upper bound by the following:
\begin{theorem}\label{upperboundconvexregular}
Let $\O$ be a strictly convex bounded domain of class $C^2$ in $\re^2$. Let us introduce the quantity $\beta$
which depends only on the geometry of $\O$ (actually its curvature $k$):
\begin{equation}
\beta = 2-\frac{1}{4}\Big(\frac{\min_{\partial\Omega} k}{\max_{\partial\Omega}k}\Big)^3 \leq 2\,.
\end{equation}
Then we have
\begin{equation}\label{upbFstconv}
F(\O)\leq \frac{\beta}{\beta +1} \leq \frac{2}{3}\,.
\end{equation}
\end{theorem}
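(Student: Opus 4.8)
The plan is to reduce \eqref{upbFstconv} to a single weighted integral inequality and then to sharpen the $P$-function estimate \eqref{P1} using the boundary curvature. Since integration by parts gives $\int_\O|\nabla u|^2\,\dx=\int_\O u(-\Delta u)\,\dx=T(\O)$ and $\int_\O(M(\O)-u)\,\dx=M(\O)|\O|-T(\O)$, the bound $F(\O)\le\beta/(\beta+1)$ is equivalent to
\[ \int_\O|\nabla u|^2\,\dx\le\beta\int_\O\big(M(\O)-u\big)\,\dx. \]
The baseline inequality \eqref{P1}, namely $|\nabla u|^2\le 2(M(\O)-u)$, yields this at once with $\beta=2$ — this is exactly the argument behind \eqref{upbFconv}. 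The whole point is therefore to improve the constant to the strictly smaller value $\beta=2-\tfrac14(k_{\min}/k_{\max})^3$ on strictly convex $C^2$ domains, a smaller $\beta$ being a stronger statement.

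Next I would assemble the boundary tools available on a strictly convex $C^2$ set, where $0<k_{\min}\le k\le k_{\max}<\infty$. Write $q:=|\nabla u|=-\partial u/\partial n>0$ on $\partial\O$, so that $u=0$ and $\nabla u=-q\,n$ there. Since $\Delta|\nabla u|^2=2|\nabla^2u|^2\ge0$, the function $|\nabla u|^2$ is subharmonic, hence attains its maximum on $\partial\O$, and by \eqref{P1} one has $q^2\le 2M(\O)$ on $\partial\O$. From the decomposition $\Delta u=u_{nn}+k\,u_n+u_{ss}$, together with the vanishing of the intrinsic second tangential derivative $u_{ss}$ on $\partial\O$ (because $u\equiv0$ there), one obtains the second-order boundary relation $u_{nn}=-1+kq$, and consequently $\partial_n\big(|\nabla u|^2\big)=2u_nu_{nn}=2q(1-kq)$ on $\partial\O$. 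At a boundary point realizing $\max_{\partial\O}q$ the tangential derivative of $q$ vanishes and Hopf's lemma forces $\partial_n(|\nabla u|^2)\ge0$, i.e. $k\,q\le1$ there; this is the elementary mechanism tying the size of the gradient to the curvature, and it is the entry point for $k_{\min}$ and $k_{\max}$. I would also keep at hand the strict concavity of $\sqrt u$ (already invoked for \eqref{P1}) and the convexity of the level sets $\{u>t\}$.

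The cubic term should be produced by an integral identity. Applying Green's formula to $w:=2M(\O)-|\nabla u|^2-2u\ge0$, using $\Delta w=2-2|\nabla^2u|^2$, the boundary data $w=2M(\O)-q^2$ and $\partial_n u=-q$, and the flux relation $\int_{\partial\O}q\,\dh=|\O|$, one finds
\[ \int_{\partial\O}q^{3}\,\dh=T(\O)+2\int_\O u\,|\nabla^2u|^2\,\dx. \]
In two dimensions $|\nabla^2u|^2\ge\tfrac12(\Delta u)^2=\tfrac12$, so this already gives $\int_{\partial\O}q^3\,\dh\ge 2\,T(\O)$. To close the argument I would pair this with a curvature-refined \emph{upper} estimate for $\int_{\partial\O}q^3\,\dh$: the pointwise constraints $q^2\le 2M(\O)$ and $kq\le1$, sharpened along $\partial\O$ through the variation of $q$ and the pinching $k_{\min}\le k\le k_{\max}$, control from below the deficit $2M(\O)-q^2$ by which $q$ stays under $\sqrt{2M(\O)}$ where the curvature is large. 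Inserting the resulting inequality into the equivalent form of the first paragraph is designed to output precisely $\beta=2-\tfrac14(k_{\min}/k_{\max})^3$, after which $F(\O)\le\beta/(\beta+1)\le2/3$ follows because $\beta\le2$ entails $\beta/(\beta+1)\le2/3$.

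The main obstacle is exactly this last quantitative step: converting the pointwise curvature information ($u_{nn}=-1+kq$, $kq\le1$ at the gradient maximum, $q^2\le 2M(\O)$) into the sharp global factor $\tfrac14(k_{\min}/k_{\max})^3$. The crude bound $|\nabla^2u|^2\ge\tfrac12$ alone only reproduces $\beta=2$, so the cubic gain must come from a finer, convexity-based analysis of how the boundary gradient is forced below $\sqrt{2M(\O)}$ on the highly curved part of $\partial\O$; I expect this to be the delicate computation, together with the careful bookkeeping of the sign convention in $u_{nn}=-1+kq$ and the verification that the constants assemble to exactly $\tfrac14(k_{\min}/k_{\max})^3$ rather than to some other admissible but weaker coefficient and power.
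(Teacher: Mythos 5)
Your preliminary reductions are correct as far as they go: the equivalence of \eqref{upbFstconv} with $\int_\O|\nabla u|^2\,\dx\le\beta\int_\O(M(\O)-u)\,\dx$, the boundary relations $u_{nn}=-1+kq$ and $\partial_n(|\nabla u|^2)=2q(1-kq)$, and the Green identity $\int_{\partial\O}q^3\,\dh=T(\O)+2\int_\O u\,|\nabla^2u|^2\,\dx$ all check out. But the proof stops exactly where the theorem begins: the factor $\tfrac14(k_{\min}/k_{\max})^3$ is never produced, and you acknowledge this yourself (``the main obstacle is exactly this last quantitative step''). Worse, the tools you assemble point partly in the wrong direction. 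Your Hopf-lemma mechanism at the boundary maximum of $q$ yields $kq\le1$, i.e.\ an \emph{upper} bound on the boundary gradient, whereas the curvature enters the constant through a \emph{lower} bound, $\min_{\partial\O}|\nabla u|\ge 1/(2\max_{\partial\O}k)$, combined with the inradius estimate $M(\O)\le\tfrac12(\min_{\partial\O}k)^{-2}$. Likewise, your identity bounds $\int_{\partial\O}q^3$ only from \emph{below} by $2T(\O)$; to bound $T$ from above you would need an upper bound on $\int_{\partial\O}q^3$, and the only one available from your setup, $q^2\le 2M(\O)$ on $\partial\O$ together with $\int_{\partial\O}q\,\dh=|\O|$, gives $\int_{\partial\O}q^3\,\dh\le 2M(\O)|\O|$, hence merely $T(\O)\le M(\O)|\O|$, i.e.\ the trivial $F\le1$ --- strictly weaker than the $\beta=2$ baseline, contrary to your claim that this route ``reproduces $\beta=2$''.

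What actually closes the argument in the paper is a \emph{pointwise} inequality, not an integral one: $|\nabla u|^2\le\tilde\beta\,(M(\O)-u)$ throughout $\O$, with $\tilde\beta=1+\sqrt{1-2P_{\min}/M(\O)}$, where $P:=k|\nabla u|^3+u[(\Delta u)^2-u_{,ij}u_{,ij}]$ is the Makar-Limanov function ($k$ being the curvature of the level lines of $u$). Superharmonicity of $P$ forces $P\ge P_{\min}=\min_{\partial\O}P$ in $\O$; solving this quadratic inequality for $k$, rewriting it as a differential inequality for $\theta=(|\nabla u|^2+2u)/\sqrt u$ along the fall lines of $u$, and integrating from an arbitrary point to the maximum point $x_0$ yields the pointwise bound. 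The cubic exponent then appears because $u=0$ on $\partial\O$, so $P=k|\nabla u|^3$ there, whence $P_{\min}\ge(\min_{\partial\O}k)\,(\min_{\partial\O}|\nabla u|)^3\ge(\min_{\partial\O}k)/(8(\max_{\partial\O}k)^3)$; dividing by $M(\O)\le\tfrac12(\min_{\partial\O}k)^{-2}$ gives $2P_{\min}/M(\O)\ge\tfrac12(k_{\min}/k_{\max})^3$ and hence $\tilde\beta\le\beta=2-\tfrac14(k_{\min}/k_{\max})^3$; integration over $\O$ then finishes as in your first paragraph. Without this (or an equivalent) mechanism converting curvature pinching into a pointwise gradient bound inside $\O$, your outline cannot be completed as written.
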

We postpone the proof of this Theorem to Section \ref{seclowerboundconvex}, where the proof for a similar lower bound will also be given at the same time.

\subsection{The lower bound}
Clearly, by the positivity of $T$, $M$, and Lebesgue measure, the infimum of $F$ is greater than or equal to zero. It is easy to show that the lower bound 0 is optimal: consider the sequence of sets
$$
\O _n:= B_1(0) \bigcup_{i=1}^n B_{r_n}(x_i)\,,\quad n\in \mathbb N\,,
$$
with $x_1,\ldots,x_n\neq 0$ distinct points in a compact set and $r_n>0$ a small parameter (whose precise value will be chosen later). In this case, $u_n:=u_{\O _n}$ is the sum of the torsion functions associated to every single connected component of $\O _n$, namely $u_n= \sum_{i=0}^n u_i$ with
$u_0:= u_{B_1(0)}$ and $u_i := u_{B_{n^{-1/4}}(x_i)}$, $i=1\ldots,n$.
Since
$$
u_0(x)= \frac{1-|x|^2}{4}\quad \hbox{and} \quad u_i= \frac{r_n^2 - |x-x_i|^2}{4}\,,
$$
it is easy to see that
$$
T(\O _n) = \frac{\omega_d}{2d(d+2)} ( 1 + n r_n^{d+2} )\,,\quad M(\O _n) = \frac14\,,\quad |\O _n| = \frac{\omega_d}{d} (1 + n r_n^{d})\,,
$$
where $\omega_d$ is the volume of the unit ball in $\re^d$. By taking $r_n= n^{-1/(2d)}$, we infer that
$$
F(\O _n)=  \frac{2}{d+2}\frac{n^{\tfrac{1}{2}-\tfrac{1}{d}} + 1}{n^{-\tfrac{1}{2}} + 1}\sim n^{-\tfrac{1}{d}}\to 0\,,
$$
implying that $\inf F = 0$.

\subsection{The lower bound for convex sets}\label{seclowerboundconvex}
By strict concavity of $\sqrt{u}$ when $\O$ is convex, see e.g.\ \cite{Kor-Le}, it is easy to get a lower bound for convex sets:
\begin{theorem}
Let $\O$ be any bounded convex set in $\re^d$, then 
\begin{equation}\label{ineqconv}
    F(\O)\geq \frac{1}{(d+1)^2}\,.
\end{equation}
\end{theorem}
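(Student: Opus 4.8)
The plan is to exploit the concavity of $w:=\sqrt{u}$ on the convex set $\O$, which is exactly the cited result \cite{Kor-Le}. First I would record its elementary consequences: $w$ is a nonnegative concave function on $\O$, continuous up to the boundary, vanishing on $\partial\O$ (since $u=0$ there), and attaining its maximum $\sqrt{M(\O)}$ at the unique interior critical point $x_0$ of $u$. The whole argument then reduces to a lower bound for the average of $w$, followed by a single application of Jensen's inequality.

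Next I would establish a cone comparison. Fix $x\in\O$ and let $y=y(x)\in\partial\O$ be the (unique, by convexity) point where the ray issued from $x_0$ through $x$ exits $\O$, so that $x=(1-t)x_0+t\,y$ for some $t\in[0,1]$. Concavity of $w$ along the segment $[x_0,y]$ gives
$$
w(x)\ \ge\ (1-t)\,w(x_0)+t\,w(y)\ =\ (1-t)\sqrt{M(\O)}\,,
$$
because $w(y)=0$. In other words $w\ge C$ pointwise, where $C$ is the cone with apex $\sqrt{M(\O)}$ over $x_0$ and base $\O$ at height $0$. Integrating, and using that the solid under such a cone is a pyramid of height $\sqrt{M(\O)}$ over a $d$-dimensional base of measure $|\O|$, I obtain
$$
\mint_\O w\,\dx\ \ge\ \mint_\O C\,\dx\ =\ \frac{\sqrt{M(\O)}}{d+1}\,.
$$

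Then I would close the estimate with Jensen's inequality (equivalently Cauchy--Schwarz). Since $u=w^2$,
$$
\frac{T(\O)}{|\O|}\ =\ \mint_\O w^2\,\dx\ \ge\ \Big(\mint_\O w\,\dx\Big)^2\ \ge\ \frac{M(\O)}{(d+1)^2}\,,
$$
and dividing by $M(\O)$ yields $F(\O)\ge 1/(d+1)^2$, as claimed.

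The only genuinely nontrivial ingredient is the concavity of $\sqrt{u}$, but this is precisely the quoted theorem, so no real obstacle remains; the cone comparison and the Jensen step are elementary. The one point worth flagging is that this route is lossy: applying the cone comparison directly to $w^2$ (rather than passing through Jensen) would yield the sharper constant $2/\big((d+1)(d+2)\big)$, consistent with the value $F=2/(d+2)$ attained by the ball. For the stated bound, however, the Jensen argument above is the shortest path.
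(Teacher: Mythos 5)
Your proof is correct and follows essentially the same route as the paper: concavity of $\sqrt{u}$ (from \cite{Kor-Le}), comparison with the cone of apex $\sqrt{M(\O)}$ over the base $\O$ to bound the average of $\sqrt{u}$ by $\sqrt{M(\O)}/(d+1)$, and then Jensen's inequality, which is exactly the paper's Cauchy--Schwarz step in averaged form. Your closing remark that integrating the squared cone directly yields the sharper constant $2/\big((d+1)(d+2)\big)$ is a nice observation that goes slightly beyond the paper's argument, but the core proof is the same.
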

\begin{proof}
Since $\sqrt{u}$ is concave, its graph is above the cone of basis $\O$ and vertex $M_0$ the maximum point of $\sqrt{u}$.
Therefore, by comparison of the volumes:
$$\int_\O \sqrt{u(x)}\,\dx \geq \frac{\sqrt{M(\O)} |\O|}{d+1}\,.$$
By taking the square of the previous inequality and using Cauchy-Schwarz inequality for the left-hand side
$$\left(\int_\O \sqrt{u(x)}\,\dx\right)^2 \leq |\O| \int_\O u(x)\,\dx\,,$$
we get the desired inequality.
\end{proof}
We believe that inequality \eqref{ineqconv} is not optimal. For example, in the plane, we conjecture:\\
{\bf Conjecture}: For any plane convex domain, the following lower bound holds:
\begin{equation}\label{conjecturelower}
    F(\O)\geq \frac{1}{3}\,.
\end{equation}
Moreover, this inequality should be optimal, and a minimizing sequence could be a sequence of isosceles triangles
degenerating to a segment.
Let us remark that when $u$ is concave, for instance when $\Omega$ is an ellipse, we obtain exactly in the same way
\begin{equation}
\frac{T(\Omega)}{|\Omega|M(\O)}\geq \frac{1}{3}\,.
\end{equation}
Sufficient conditions on the geometry of $\Omega$ to insure the concavity of $u$  have been established by Kosmodem'yanskii in \cite{Ko}.

\medskip
Let us conclude this section with a theorem in the spirit of Theorem \ref{upperboundconvexregular}.
\begin{theorem}\label{lowerboundconvexregular}
Let $\O$ be a strictly convex bounded domain of class $C^2$ in $\re^2$.
Then we have
\begin{equation}\label{lobFstconv}
F(\O)\geq \frac{1}{4}\left(\frac{{\min_{\partial\Omega} k }}{\max_{\partial\Omega} k}\right)^3,
\end{equation}
where $k$ is the curvature of $\O$.
\end{theorem}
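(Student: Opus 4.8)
The plan is to bypass the $P$-function route and instead combine the Rellich--Pohozaev identity for the torsion problem with two elementary comparison estimates coming from the curvature bounds, converting everything into an estimate for $F(\O)=T(\O)/(M(\O)|\O|)$. Throughout I write $u=u_\O$ and $q:=|\nabla u|$ on $\partial\O$, and I fix the origin inside $\O$, so that $x\cdot n\ge 0$ on $\partial\O$ (this is the support function of the convex set, which is nonnegative once $0\in\O$).

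First I would establish the identity
\[
\int_{\partial\O}(x\cdot n)\,q^2\,\dh = 4\,T(\O),
\]
obtained by multiplying $-\Delta u=1$ by $x\cdot\nabla u$, integrating over $\O$, and integrating by parts, using $u=0$, $\nabla u=(\partial_n u)\,n$ and $\partial_n u=-q$ on $\partial\O$, together with $\mathrm{div}\,x=2$ in the plane. Combined with the elementary relation $\int_{\partial\O}(x\cdot n)\,\dh=2|\O|$ and the pointwise bound $q^2\ge\min_{\partial\O}q^2$, this gives $4T\ge 2(\min_{\partial\O}q^2)\,|\O|$, hence
\[
F(\O)=\frac{T(\O)}{M(\O)|\O|}\ \ge\ \frac{\min_{\partial\O}q^2}{2\,M(\O)}.
\]
In this way the statement is reduced to a lower bound for the boundary gradient and an upper bound for $M(\O)$.

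Next come the two curvature estimates. Since $\O$ is $C^2$ and strictly convex with $k\le\max_{\partial\O}k$, Blaschke's rolling theorem provides, at every boundary point, an internally tangent disk of radius $1/\max_{\partial\O}k$ contained in $\O$; comparing $u$ with the explicit torsion function of this disk through the maximum principle and reading off the normal derivatives at the contact point yields $q\ge \tfrac{1}{2\max_{\partial\O}k}$ on all of $\partial\O$, i.e.\ $\min_{\partial\O}q^2\ge \tfrac{1}{4(\max_{\partial\O}k)^2}$. Dually, since $k\ge\min_{\partial\O}k$, the body $\O$ is contained in a disk of radius $1/\min_{\partial\O}k$, and monotonicity of the torsion function under inclusion gives $M(\O)\le \tfrac{1}{4(\min_{\partial\O}k)^2}$. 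Inserting both into the displayed lower bound, and using that the curvature ratio lies in $(0,1]$, I obtain
\[
F(\O)\ \ge\ \frac12\Big(\frac{\min_{\partial\O}k}{\max_{\partial\O}k}\Big)^2\ \ge\ \frac14\Big(\frac{\min_{\partial\O}k}{\max_{\partial\O}k}\Big)^3,
\]
which proves (and in fact slightly sharpens) \eqref{lobFstconv}.

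The main obstacle is the rigorous justification of the two geometric comparison facts for a general $C^2$ strictly convex domain: the internally tangent and the containing disks rely on Blaschke's rolling theorem and its reverse, while the maximum-principle comparisons require $u$ to be regular enough up to $\partial\O$ for the normal derivatives to be taken classically. Both are guaranteed here by $C^2$ strict convexity together with elliptic regularity, but they are the points that deserve care; the same boundary regularity is what legitimizes the integration by parts in the Rellich identity, after which the remaining manipulations are routine.
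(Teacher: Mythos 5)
Your proof is correct, and it takes a genuinely different route from the paper's. The paper works with the Makar--Limanov function $P = k|\nabla u|^3 + u[(\Delta u)^2 - u_{,ij}u_{,ij}]$: its superharmonicity gives $P \geq P_{\min}$ with the minimum attained on $\partial\O$, and integrating this inequality over $\O$ (using $\int_\O u_{,ij}u_{,i}u_{,j}\,\dx = -\int_\O u\, u_{,ij}u_{,ij}\,\dx$ and $(\Delta u)^2 \leq 2u_{,ij}u_{,ij}$) yields $T(\O) \geq P_{\min}|\O|$; the conclusion then follows from $P_{\min} \geq \big(\min_{\partial\O}|\nabla u|\big)^3\min_{\partial\O}k$ together with the boundary gradient bound $\min_{\partial\O}|\nabla u| \geq 1/(2\max_{\partial\O}k)$ and Payne's bound $M(\O) \leq \tfrac12\rho^2\le \tfrac12\big(\min_{\partial\O}k\big)^{-2}$, both quoted from \cite{Pay2}, \cite{Pay-Phi2}. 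You replace the entire $P$-function machinery by the Pohozaev identity $\int_{\partial\O}(x\cdot n)|\nabla u|^2\,\dh = 4\,T(\O)$, which, combined with $\int_{\partial\O}(x\cdot n)\,\dh = 2|\O|$ and $x\cdot n \geq 0$, immediately gives $T(\O) \geq \tfrac12\big(\min_{\partial\O}|\nabla u|\big)^2|\O|$; after that you need the same two geometric ingredients as the paper, and your rolling-disk comparison is essentially the standard proof of the gradient estimate the paper cites. Two remarks: first, your argument closes even if one does not want to invoke the reverse rolling theorem, since Payne's inradius bound $M(\O)\le \tfrac12\big(\min_{\partial\O}k\big)^{-2}$ (as used in the paper) still gives $F(\O) \geq \tfrac14\big(\min_{\partial\O}k/\max_{\partial\O}k\big)^2 \geq \tfrac14\big(\min_{\partial\O}k/\max_{\partial\O}k\big)^3$; second, and more interestingly, your route actually proves the stronger inequality $F(\O) \geq \tfrac12\big(\min_{\partial\O}k/\max_{\partial\O}k\big)^2$, which dominates the stated bound because the curvature ratio is at most $1$, and which is sharp (equality for the disk, where $F=1/2$), whereas the paper's constant $\tfrac14$ is not attained there. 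What the paper's heavier approach buys instead is the two-sided pointwise estimate $\tilde\alpha(M(\O)-u) \leq |\nabla u|^2 \leq \tilde\beta(M(\O)-u)$, which simultaneously yields the upper bound of Theorem \ref{upperboundconvexregular}; your identity-based argument produces only the lower bound.
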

Set for brevity $\alpha:= \big(\min_{\partial\Omega} k / \max_{\partial\Omega} k\big)^3/4$. Note that inequality \eqref{lobFstconv} is better than the general inequality \eqref{ineqconv} when $\alpha>1/8$,
which occurs when $\min_{\partial\Omega} k> \max_{\partial\Omega} k/\sqrt[3]{2}$.
In \cite{Pay-Phi}, Payne and Philippin have derived sharp upper bounds for $|\nabla u|$. The goal of this section is to derive new lower bounds for these quantities by using the same approach as in \cite{Pay-Phi} . For the torsion problem one can associate an auxiliary function involving the curvature $k$ of the level lines \{$u$ = const.\}. Properly chosen, the auxiliary function turns out to satisfy some minimum principles, implying the convexity of the level sets of $u$, under suitable convexity assumptions on $\O$. These results have been derived by Makar-Limanov in \cite{Mak-Lim} for the torsion problem 
with the associate function
\begin{equation}\label{def-M}
P(x):= k|\Delta u|^3 + u[(\Delta u)^2 - u_{,ij}u_{,ij}]\,.
\end{equation}

\begin{proof}[Proof of Theorems \ref{upperboundconvexregular} and \ref{lowerboundconvexregular}]
Making use of normal coordinates with respect to the level lines \{$u$ = const.\} we have
\begin{equation}
|\nabla u|^2 = u_{,i}u_{,i} = u_n^2\,,
\end{equation}
\begin{equation}
\Delta u = u_{nn} + k u_n\,,
\end{equation}
\begin{equation}
u_{,ij}u_{,ij} = u_{nn}^2 + k^2 u_n^2 + 2u_{ns}^2\,,
\end{equation}
where an index $n$ stands for the outward normal derivative  and an index $s$ stands  for the derivative along the level lines \{$u$ = const.\}, and $k$ is the curvature of the level lines defined as
\begin{equation}
k:= -\bigg(\frac{u_{,i}}{|\nabla u|}\bigg)_{,i} = \frac{u_{,ij}u_{,i}u_{,j}- |\nabla u|^2 \Delta u}{|\nabla u|^3}\,.
\end{equation}
The Makar-Limanov function $P$ introduced in \eqref{def-M} may be rewritten in terms of normal coordinates as
\begin{equation}
P = |\nabla u|^3 k - 2[ku_n + k^2u_n^2 + u_{ns}^2]\,.
\end{equation}
Makar-Limanov's result is based on the fact that $P$ is super-harmonic. It then follows that $P$ takes its minimum value $P_{\min}$ on $\partial\Omega$, so that the following quadratic inequality for $k$ holds:
\begin{equation}\label{2.6}
 |\nabla u|^3 k - 2u[ku_n + k^2u_n^2 + u_{ns}^2]\geq P_{\min}\,, \quad x\in \Omega\,.
 \end{equation}
Omitting the term containing $u_{ns}^2$ and solving \eqref{2.6} for $k$, we obtain
 \begin{equation}\label{2.7}
 \frac{\Phi}{4u}\{1-\sqrt{1-z}\} \leq k|\nabla u| \leq \frac{\Phi}{4u}\{1 + \sqrt{1-z}\}\,,
 \end{equation}
with
\begin{equation}\label{2.8}
\Phi:=|\nabla u|^2 + 2u\,,
\end{equation}
\begin{equation}
z:= \frac{8P_{\min}u}{\Phi^2}\,.
\end{equation}
We note that $z\leq 1$ in view of the inequality
\begin{equation}
P \leq \frac{1}{8u}\Phi^2 \quad \hbox{in } \Omega\,,
\end{equation}
derived in \cite{Phi-Po}. Multiplying \eqref{2.7} by $-2|\nabla u|\sqrt{u}$, we obtain
\begin{equation}\label{2.11}
-\frac{\Phi|\nabla u|}{2\sqrt{u}}\{1+\sqrt{1-z}\} \leq \sqrt{u}\frac{\partial \Phi}{\partial n} \leq -\frac{\Phi|\nabla u|}{2\sqrt{u}}\{1-\sqrt{1-z}\}\,,
\end{equation}
in view of
\begin{equation}
\frac{\partial \Phi}{\partial n} = -2k|\nabla u|^2\,.
\end{equation}
For convenience we set
\begin{equation}\label{2.13}
\theta:= \frac{\Phi}{\sqrt{u}} = \frac{|\nabla u|^2}{\sqrt{u}} +2\sqrt{u}\,.
\end{equation}
Replacing \eqref{2.13} in \eqref{2.11}, these inequalities reduce to
\begin{equation}
\frac{\partial u}{\partial n}\sqrt{\Big(\frac{\theta}{2}\Big)^2 - 2P_{\min}}  \leq u \frac{\partial \theta}{\partial n} \leq -\frac{\partial u}{\partial n}\sqrt{\Big(\frac{\theta}{2}\Big)^2 - 2P_{\min}}\,,
\end{equation}
which are equivalent to
\begin{equation}\label{2.15}
-\frac{1}{2}\frac{\d u}{u} \leq -\frac{\d \theta}{\sqrt{\theta^2 - 8P_{\min}}} \leq \frac{1}{2}\frac{\d u}{u}\,.
\end{equation}
These inequalities link the functions $u$ and $\theta$ to their differentials along the orthogonal trajectories of the level lines (also called {\it fall lines} of $u$). Rewriting \eqref{2.15} in the form
\begin{equation}
-\frac{1}{2}\d(\log u) \leq -\d(\log[\theta + \sqrt{\theta^2 - 8P_{\min}}]) \leq \frac {1}{2}\d(\log u)
\end{equation}
and integrating from a point $x\in\Omega$ to the maximum point $x_0$ of $u$ along the fall line joining these points, we obtain
\begin{equation}\label{2.17}
\sqrt{\frac{u(x)}{u(x_0)}}\leq \frac{\theta(x) + \sqrt{\theta^2(x) - 8P_{\min}}}{\theta_0 + \sqrt{\theta_0^2 -8P_{\min}}} \leq \sqrt{\frac{u(x_0)}{u(x)}}\,,
\end{equation}
with
\begin{equation}
\theta_0 := \theta(x_0)=2\sqrt{u(x_0)}\,.
\end{equation}
Multiplying \eqref{2.17} by $(\theta_0 + \sqrt{\theta_0^2 -8P_{\min}})\sqrt{u}$, replacing back $\theta$ by $\Phi$, and recalling that $u(x_0)=M(\O)$, we obtain
\begin{equation}\begin{array}[t]{rcl}\label{2.19}
2u\Big(1+\sqrt{1-\frac{2P_{\min}}{M(\O)}}\Big) - \Phi &\leq& \sqrt{\Phi^2-8P_{\min}u}\\[12pt] &\leq& 2 M(\O)\Big(1+\sqrt{1-\frac{2P_{\min}}{M(\O)}}\Big) - \Phi\,.
\end{array}\end{equation}
Squaring \eqref{2.19} and solving for $\Phi$, we obtain
\begin{equation}\begin{array}[t]{rcl}\label{2.20}
\displaystyle\frac{2P_{\min}}{1+\sqrt{1-\frac{2P_{\min}}{M(\O)}}} +u\Big(1+\sqrt{1-\frac{2P_{\min}}{M(\O)}}\Big) \leq \Phi\\[24pt] \leq\displaystyle\frac{2P_{\min}u}{M(\O)\Big(1+\sqrt{1-\frac{2P_{\min}}{M(\O)}}\Big)} +M(\O)\Big(1+\sqrt{1-\frac{2P_{\min}}{M(\O)}}\Big)\,.
\end{array}\end{equation}
Replacing \eqref{2.8} in \eqref{2.20}, after some reduction we obtain the basic inequalities
\begin{equation}\label{2.21}
\tilde\alpha(M(\O) - u) \leq |\nabla u|^2 \leq \tilde\beta (M(\O)- u)\quad \hbox{in } \Omega\,,
\end{equation}
 with
\begin{equation}\label{2.22}
\tilde\alpha := 1-\sqrt{1-\frac{2P_{\min}}{M(\O)}}\,,
\end{equation}
\begin{equation}\label{2.23}
\tilde\beta :=1+\sqrt{1-\frac{2P_{\min}}{M(\O)}}\,.
\end{equation}
The upper bound for $|\nabla u|^2$ in \eqref{2.21} was already derived in \cite{Pay-Phi}. The lower bound is nontrivial only for strictly convex $\Omega$, whereas the upper bound makes sense even for nonconvex $\Omega$. However in this case $P_{\min}$ is negative, and $\tilde\beta$ is greater than two. We note that inequalities \eqref{2.21} are exact when $\tilde\alpha = \tilde\beta$, i.e.\ when $2P_{\min}(M(\O))^{-1} = 1$. This is the case if and only if $\Omega$ is a disk. For practical use of \eqref{2.21} a computable positive lower bound for the quantity $ 2P_{\min}(M(\O))^{-1}$ is needed. To this end, we write
\begin{equation}\label{2.24}
P_{\min}= \min_{\partial\Omega}(|\nabla u|^3k) \geq  \big(\min_{\partial\Omega} |\nabla u|\big)^3\big( \min_{\partial\Omega}k\big)
\end{equation}
and make use of the inequalities
\begin{equation}\label{2.25}
 \min_{\partial\Omega}|\nabla u| \geq \frac{1}{2  \max_{\partial\Omega} k}\,,
\end{equation}
\begin{equation}\label{2.26}
M(\O) = \max_{\O} u \leq \frac{1}{2}\rho^2 \leq \frac{1}{2}  \big(\min_{\partial\Omega} k\big)^{-2}\,,
\end{equation}
derived in \cite{Pay2}, \cite{Pay-Phi2}, where $\rho$ is the inradius of $\Omega$. Using \eqref{2.24}, \eqref{2.25}, and \eqref{2.26}, for strictly convex $\Omega$ we have
\begin{equation}\label{2.27}
\frac{2P_{\min}}{M(\O)} \geq \frac{2 \big(\min_{\partial\Omega} |\nabla u|\big)^3\big( \min_{\partial\Omega}k\big)}{M(\O)}\geq \frac{1}{2}\left(\frac{\min_{\partial\Omega} k}{\max_{\partial\Omega}k}\right)^3 =2 \alpha\,.
\end{equation}
Replacing \eqref{2.27} in \eqref{2.22} and in \eqref{2.23}, we obtain the bounds
\begin{equation}\label{2.28}
\tilde\alpha \geq \alpha\,, \quad 
\tilde\beta \leq \beta := 2-\alpha\,,
\end{equation}
in particular, the quantities $\alpha$ and $\beta$ may be used in \eqref{2.21} instead of $\tilde\alpha$ and $\tilde\beta$, respectively.
\medskip
Integrating \eqref{2.21} over $\Omega$ and exploiting the estimates \eqref{2.28}, we obtain the following bounds for $\frac{T(\Omega)}{|\Omega|M(\O)}$:
\begin{equation}
\frac{\alpha}{\alpha + 1} \leq \frac{T(\Omega)}{|\Omega|M(\O)} \leq \frac{\beta}{\beta + 1} \leq \frac{2}{3}\,.
\end{equation}
The upper bound proves Theorem \ref{upperboundconvexregular}.

A better lower bound for $\frac{T(\Omega)}{|\Omega|M(\O)}$ may be derived by integrating the inequality
\begin{equation}
P( x) = u_{,ij}u_{.i}u_{.j} - |\nabla u|^2\Delta u + u[(\Delta u)^2 - u_{.ij}u_{,ij}] \geq P_{\min}
\end{equation}
over $\Omega$. Making use of
\begin{equation}
\int_{\Omega}u_{,ij}u_{,i}u_{,j}\, \dx = - \int_{\Omega}u\, u_{,ij}u_{,ij}\, \dx\,,
\end{equation}
we obtain
\begin{equation}
\int_{\Omega}P(x)\, \dx = \int_{\Omega}u[(\Delta u)^2 - 2u_{,ij}u_{,ij}]\, \dx + T(\Omega) \geq P_{\min}|\Omega|\,.
\end{equation}
Since $(\Delta u)^2 - 2u_{,ij}u_{,ij} \leq 0$, it follows that
\begin{equation}
T(\Omega) \geq P_{\min}|\Omega|\,.
\end{equation}
This inequality, together with \eqref{2.27}, gives the lower bound $F(\O) \geq\alpha$, concluding the proof of Theorem \ref{lowerboundconvexregular}.
\end{proof}

\section{Bounds for the functional $G$}

\subsection{The upper bound}
Here we gather the known upper bounds for $G$. In \cite[Theorem 1]{VC}, the authors showed that for every bounded open set $\O\subset \re^d$
$$
G(\O)\leq 3 d \ln 2 + 4\,.
$$
Recently, such estimate was improved by Vogt: in \cite[Theorem 1.5]{V}, the author, exploiting semigroups techniques, proved that 
$$
G(\O) \leq \frac{d}{4} + \frac{1}{4} \sqrt{5(1 + \ln 2 / 4)} \sqrt{d} + 1\,.
$$

Finding the optimal upper bound suggests to look at the shape optimization problem:
\begin{equation}\label{optimG}
\mathcal{P}_G \qquad \sup \{G(\Omega)\,, \ \Omega \subset \re^d\}.
\end{equation}
Even if it looks like as a standard shape optimization problem, the existence of a solution is not clear for us. We believe that a maximizer does not exist and a partial result in this direction is given by the following
\begin{proposition}\label{top-der}
Let $\O\subset \re^d$ be a bounded open set of class $C^2$. Assume that $G$ is differentiable at $\O$ (i.e., the shape derivative of $G$ at $\O$ exists and is given by \eqref{G'2}). Then $\O$ is not a maximizer for $G$.
\end{proposition}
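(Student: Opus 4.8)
The plan is to argue by contradiction: assume that $\O$ is a maximizer of $G$ and exhibit a competitor that strictly increases $G$. Since a smooth boundary deformation is forced to move the two factors of $G=M\lambda_1$ in opposite useful directions, I would instead use a \emph{topological} perturbation, drilling a small ball out of $\O$. The standing differentiability hypothesis is exactly what makes the two ingredients of the computation well defined: it forces $\lambda_1(\O)$ to be simple (so that $\varphi$ is unambiguous) and forces the torsion function $u$ to attain its maximum $M(\O)$ at a single, nondegenerate interior point, which I call $x_M$. Throughout, $\mathcal G_\O$ denotes the Green function of $-\Delta$ on $\O$ (not to be confused with the functional $G$).

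First I would record the two topological derivatives produced by the hole $B_\e(x_0)$, $x_0\in\O$, $\O_\e:=\O\setminus\overline B_\e(x_0)$. For the eigenvalue, the classical Rauch--Taylor/Ozawa expansion gives, for $d\ge 3$,
\[
\lambda_1(\O_\e)=\lambda_1(\O)+\mathrm{cap}(B_\e)\,\varphi(x_0)^2+o(\e^{d-2}),\qquad \mathrm{cap}(B_\e)=(d-2)\,d\,\omega_d\,\e^{d-2},
\]
the plane case being identical with the logarithmic capacity $\mathrm{cap}(B_\e)\sim 2\pi/|\log\e|$. For the torsion maximum I would write $u_\e=u-w_\e$, where $w_\e\ge 0$ is harmonic in $\O_\e$, equals $u$ on $\partial B_\e(x_0)$ and vanishes on $\partial\O$; comparing $w_\e$ with the capacitary potential of the hole yields $w_\e=\mathrm{cap}(B_\e)\,u(x_0)\,\mathcal G_\O(\cdot,x_0)+o(\e^{d-2})$ uniformly away from $x_0$. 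Since $x_M$ is nondegenerate and $w_\e$ is smooth near it, the maximum point only drifts at higher order, so that
\[
M(\O_\e)=M(\O)-\mathrm{cap}(B_\e)\,u(x_0)\,\mathcal G_\O(x_M,x_0)+o(\e^{d-2}),\qquad x_0\ne x_M.
\]

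Multiplying the two expansions gives the topological derivative of $G$,
\[
G(\O_\e)-G(\O)=\mathrm{cap}(B_\e)\,\Big[M(\O)\,\varphi(x_0)^2-\lambda_1(\O)\,u(x_0)\,\mathcal G_\O(x_M,x_0)\Big]+o(\e^{d-2}),
\]
and it then suffices to find a single $x_0\in\O\setminus\{x_M\}$ making the bracket strictly positive: for that $x_0$ and $\e$ small one has $G(\O_\e)>G(\O)$, contradicting maximality. The delicate point is that the two contributions have opposite signs (removing material raises $\lambda_1$ but lowers $M$), so the whole argument rests on forcing the eigenvalue term to dominate.

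To locate such a point I would analyse the bracket via the spectral identity $\lambda_1\mathcal G_\O(x_0,x_M)=\varphi(x_0)\varphi(x_M)+\sum_{k\ge 2}\lambda_1\lambda_k^{-1}\varphi_k(x_0)\varphi_k(x_M)$, which to leading order reduces the sign to a comparison of $\varphi(x_0)/u(x_0)$ with $\varphi(x_M)/M$. Because $\nabla u(x_M)=0$, the point $x_M$ cannot be a critical point of $\varphi/u$ whenever $\nabla\varphi(x_M)\ne 0$, i.e.\ whenever the maxima of $u$ and of $\varphi$ differ; then $\varphi/u$ exceeds its value at $x_M$ somewhere and the bracket is positive there. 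I expect the hard part to be the residual case in which the two maxima coincide (as symmetry may impose) together with the control of the higher modes: there the leading comparison is inconclusive and one must instead push $x_0\to p\in\partial\O$, where both terms are of order $\mathrm{dist}(x_0,\partial\O)^2$ and the sign is dictated by $M|\nabla\varphi(p)|^2$ versus $\lambda_1|\nabla u(p)|\,P_\O(x_M,p)$, with $P_\O$ the Poisson kernel. Showing that this boundary quantity is positive at some $p$ — alongside the routine but technical justification of the capacitary expansions (their uniformity, the drift of $x_M$, and the $d=2$ logarithmic scaling) — is the technical heart of the proof.
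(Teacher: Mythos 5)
Your setup is exactly the paper's: drill a hole $B_\e(x)$, use the Cioranescu--Murat/Maz'ya-type expansions for $\lambda_1(\O_\e)$ and for the torsion function, and reduce the contradiction to finding one point $x$ where the bracket $R(x)=M(\O)\varphi^2(x)-\lambda_1(\O)\,u(x)\,\phi_{x_0}(x)$ is strictly positive. (One simplification you missed: since only a lower bound on $M(\O_\e)$ is needed, it suffices to evaluate the expanded torsion function at the old maximum point $x_0$; the whole discussion of the nondegeneracy of the maximum and the drift of $x_M$ is unnecessary.) But the step you yourself flag as ``the technical heart'' --- producing a point where the bracket is positive --- is precisely where your proposal has a genuine gap, and neither of your two routes closes it. The spectral route fails because the tail $\sum_{k\ge2}\lambda_1\lambda_k^{-1}\varphi_k(x)\varphi_k(x_M)$ in the expansion of $\lambda_1\phi_{x_M}(x)$ is \emph{not} a lower-order correction: there is no small parameter in that identity, so the sign of $R$ cannot be read off from the first mode alone, and the argument via non-criticality of $\varphi/u$ at $x_M$ says nothing about the true bracket. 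The boundary route stalls for a structural reason: as $x\to p\in\partial\O$ all three functions vanish linearly, so $R(x)=\delta^2\big[M(\O)(\partial_n\varphi)^2-\lambda_1(\O)\,\partial_n u\,\partial_n\phi_{x_0}\big](p)+O(\delta^3)$, and the sign of that second-order coefficient is exactly the quantity appearing in the criticality condition \eqref{opt-cond} --- it cannot be forced positive by a pointwise argument, and indeed its integral over $\partial\O$ vanishes for \emph{every} domain by scale invariance.

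What you never use --- and what the proposition's hypothesis is there for --- is that differentiability of $G$ at $\O$ together with the assumed maximality implies the first-order optimality condition \eqref{opt-cond}: the bracket $\big[M(\O)(\partial_n\varphi)^2-\lambda_1(\O)\,\partial_n u\,\partial_n\phi_{x_0}\big]$ vanishes \emph{identically} on $\partial\O$. (You instead interpret the hypothesis as simplicity of $\lambda_1$ plus nondegeneracy of the maximum, which is not its role here.) Once the $\delta^2$ term is killed by criticality, the paper expands one order further: writing $x=x_1-\delta n(x_1)$ and using $\partial^2_{nn}u=-1-H\,\partial_n u$ (the $-1$ coming from the source term in $-\Delta u=1$), $\partial^2_{nn}\varphi=-H\,\partial_n\varphi$, $\partial^2_{nn}\phi_{x_0}=-H\,\partial_n\phi_{x_0}$, one gets
\begin{equation}
R(x)=-\frac{\delta^3}{2}\,\lambda_1(\O)\,\frac{\partial \phi_{x_0}}{\partial n}(x_1)+o(\delta^3)\,,
\end{equation}
which is strictly positive by Hopf's lemma. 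So the tie you correctly identified at second order is broken at third order by the inhomogeneity of the torsion equation --- this is the missing idea, and without it (or the criticality condition that sets it up) your argument cannot conclude.
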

The proof is postponed to \S \ref{ss-sd} and is based on a topological derivative argument: under suitable regularity assumptions on $\O$, removing a small hole near the boundary makes $G$ decrease.

In order to further investigate $\mathcal{P}_G$, other useful tools are represented by numerical tests and the theory of shape derivatives. The former technique suggests that, in the case of polygons in the plane, the optimum should be non convex.
The latter, that we detail in \S \ref{ss-sd}, provides a necessary condition for critical shapes; in particular, it turns out that the equilateral triangle, even if strictly better than the disk, is not optimal for $\sup G$ in dimension $d=2$ (see Corollary \ref{cor-T} below).

Let us now consider the restricted class of convex domains, for which the equilateral triangle could be a maximizer.

\subsection{The maximization problem in the convex framework}
Unlike what happens in the general case, if we add the convex constraint, the existence of a maximizer for $G$ is guaranteed.

\begin{theorem}
The shape functional $G$ admits a maximizer in the class of bounded convex sets of $\mathbb R^d$.
\end{theorem}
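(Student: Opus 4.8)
\emph{Proof strategy.} The plan is to run the direct method of the calculus of variations. First I would record two facts: the supremum $S:=\sup\{G(\Omega):\Omega\subset\mathbb R^d\text{ convex}\}$ is finite, by the bounds of \cite{VC,V} recalled above, and it satisfies $S>\pi^2/8$, since a direct computation for the ball $B$ gives $M(B_R)=R^2/(2d)$ and $\lambda_1(B_R)=j_{d/2-1,1}^2R^{-2}$, hence $G(B)=j_{d/2-1,1}^2/(2d)$, a quantity larger than $\pi^2/8$ in every dimension. I would then take a maximizing sequence of convex sets $(\Omega_n)$ with $G(\Omega_n)\to S$. Exploiting the scale invariance of $G$, I normalize $\mathrm{diam}(\Omega_n)=1$ and, after a translation, assume $\Omega_n\subset\overline{B}_1(0)$. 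The Blaschke selection theorem then provides a subsequence whose closures converge, in Hausdorff distance, to a convex compact set $K\subset\overline{B}_1(0)$.

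The core of the argument is to rule out that $K$ degenerates to a lower-dimensional set. Suppose, for contradiction, that $K$ has empty interior. Since the diameter and the minimal width are continuous along Hausdorff-converging convex bodies, the minimal width $w_n:=w(\Omega_n)$ would tend to $0$ while $\mathrm{diam}(\Omega_n)=1$. Taking the direction that realizes $w_n$, the set $\Omega_n$ is contained in a slab of width $w_n$, so domain monotonicity of the torsion function gives at once $M(\Omega_n)\le w_n^2/8$ (the same inclusion yields only the easy, and here useless, lower bound $\lambda_1(\Omega_n)\ge\pi^2/w_n^2$). What I really need is the matching upper bound $\lambda_1(\Omega_n)\le\pi^2 w_n^{-2}(1+o(1))$; multiplying it by $M(\Omega_n)\le w_n^2/8$ would give $\limsup_n G(\Omega_n)\le\pi^2/8<S$, contradicting $G(\Omega_n)\to S$ and forcing $K$ to have nonempty interior. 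I would obtain this eigenvalue asymptotics from the structure of thin convex domains: the first eigenfunction concentrates on the widest cross-section, whose width is exactly $w_n$, so a test function supported near that slice should realize $\limsup_n w_n^2\lambda_1(\Omega_n)\le\pi^2$.

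Once $K$ is known to be non-degenerate, I set $\Omega_\infty:=\mathrm{int}(K)$, a bounded convex domain, and it remains to show $G(\Omega_\infty)=S$ by continuity. For convex bodies, Hausdorff convergence implies $\gamma$-convergence of $\Omega_n$ to $\Omega_\infty$, whence $\lambda_1(\Omega_n)\to\lambda_1(\Omega_\infty)$ and the zero-extended torsion functions $u_n$ converge to $u_{\Omega_\infty}$ strongly in $L^2$. To promote this to the convergence of the maxima $M(\Omega_n)=\|u_n\|_{L^\infty}$, I would reuse the argument of Theorem \ref{convergence of the norm} almost verbatim: each $u_n$, extended by zero, satisfies $-\Delta u_n\le 1$ in the fixed ball $D:=B_1(0)$, so the estimate \eqref{max2} holds on $D$ with a single constant $C=C(D)$; inserting $(u_n-b)^+$ into \eqref{max2} and invoking the $L^2$ (hence $L^1$) convergence rules out a gap between $\|u_{\Omega_\infty}\|_{L^\infty}$ and $\liminf_n\|u_n\|_{L^\infty}$, giving $M(\Omega_n)\to M(\Omega_\infty)$. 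Combining the two limits yields $G(\Omega_\infty)=\lim_n G(\Omega_n)=S$, so $\Omega_\infty$ is the sought maximizer.

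The one genuinely delicate point will be the thin-domain eigenvalue bound in the second step: the inequality $w_n^2\lambda_1(\Omega_n)\ge\pi^2$ is immediate, but the matching upper bound requires a test function adapted to the widest slice, with the tangential gradient and the cut-off energy estimated at the correct scale. Everything else—Blaschke compactness, continuity of diameter and width, $\gamma$-convergence of convex domains, and the reuse of \eqref{max2}—is either standard or already available in the paper.
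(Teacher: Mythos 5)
Your proposal is correct and follows essentially the same route as the paper: maximizing sequence, normalization and Blaschke selection, non-degeneracy of the limit via the two thin-domain estimates $M(\Omega_n)\le w_n^2/8$ and $\lambda_1(\Omega_n)\le \pi^2 w_n^{-2}(1+o(1))$, and then continuity of $G$ under Hausdorff convergence of convex sets. The only difference is one of self-containedness: the paper cites the eigenvalue upper bound from \cite{Naples} (formula \eqref{Ln}) and the continuity of $G$ from \cite{Bu-Bu}, \cite{HP}, whereas you propose to prove the former by a test function adapted to the width-realizing chord and the latter by reusing the argument of Theorem \ref{convergence of the norm} based on \eqref{max2} — both of which are sound ways to fill in those cited ingredients.
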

\begin{proof}
Let $\O_n$ be a maximizing sequence of convex subsets of $\mathbb R^d$. By the scale invariance of $G$, without loss of generality, we may assume that the elements of the sequence are all contained in a fixed bounded set $K$. 
By the Blaschke selection theorem, there exists a subsequence (not relabeled) converging to some convex set $\O \subset K$ in the Hausdorff metric.

We claim that the minimal width $w_n$ of $\O_n$ does not vanish as $n\to +\infty$, so that the limit set $\O$ has non-empty interior: choosing a suitable reference frame in $\mathbb R^d$, we may assume that $\O_n$ is contained in the strip $\{ x\in \mathbb R^d\ :\   0\leq x_d\leq w_n \}$; by the maximum principle, it is easy to see that the torsion function $u_n$ of $\O_n$ 
satisfies $u_n(x) \leq {x_d(w_n-x_d)}/{2}$ in $\overline{\O}_n$, in particular
\begin{equation}\label{Mn}
M(\O_n) \leq \frac{w_n^2}{8}\,;
\end{equation}
on the other hand, in \cite[formula (4.8)]{Naples} the authors provide the following upper bound for $\lambda_1$ in terms of the minimal width:
\begin{equation}\label{Ln}
\lambda_1(\O_n) \leq \frac{\pi^2}{w_n^2} \left( 1 + c_n ( 3/2 + 3/2^{1/3} + 2^{1/3})\right)\,,
\end{equation}
with $c_n=c_n(\O_n)$ a positive constant vanishing as $w_n\to 0$ (see (19) in \cite{Naples}). By combining \eqref{Mn}, \eqref{Ln}, and the lower bound \eqref{lbGconvex}, we conclude that $G(\O_n)\to \pi^2/8$. This gives a contradiction, since $\pi^2/8$ is clearly not the maximum of $G$.

Since the functional $G$ is continuous in the class of bounded open convex sets (see \cite{Bu-Bu}, \cite{HP}), with respect to the Hausdorff metric, we conclude that the limit set $\O$ is a maximizer for $G$.
\end{proof}

The problem of finding an optimal set is still open. We conjecture that in dimension $d=2$ the maximizer of $G$ among the convex sets is the equilateral triangle $T$, namely, for every $\O$ convex, 
$G(\O)\leq \frac{4}{27} \pi^2=G(T)$ (see the computations in the proof of Corollary \ref{cor-T} below).

\subsection{The lower bound}
The lower bound $G(\O)\geq 1$ is obvious: indeed, making use of \eqref{torsion} and \eqref{eigen}, we get
$$
\int_\O \varphi\, \dx =  - \int_\O \varphi \Delta u \, \dx= -\int_\O u \Delta \varphi \, \dx= \lambda_1(\O)\int_\O u \varphi \, \dx \leq G(\O) \int_\O \varphi\, \dx\,.
$$

Exploiting the same strategy used for the upper bound of $F$, we show that the constant 1 is sharp. 
\begin{theorem}
In any dimension, we can find a sequence of domains $\Omega_\e$ such that $G(\Omega_\e)\to 1$.
\end{theorem}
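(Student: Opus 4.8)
The plan is to mimic exactly the homogenization strategy used to prove sharpness of the upper bound $F(\O)\leq 1$ in Theorem \ref{sharp upper bound F}, reusing the Cioranescu--Murat perforated domains $\O_\e$ defined in \eqref{oe} with radii \eqref{re}. On such domains the torsion functions $u_\e$ converge weakly in $H^1_0(\O)$ to the solution $u^*$ of the homogenized problem $-\Delta u^* + a u^* = 1$, and by Theorem \ref{convergence of the norm} we have $M(\O_\e)=\|u_\e\|_{L^\infty(\O)}\to \|u^*\|_{L^\infty(\O)}$ along a subsequence. The point is that on the same geometry the \emph{first eigenvalue} also homogenizes, so we can control both factors of $G(\O_\e)=M(\O_\e)\lambda_1(\O_\e)$ simultaneously as $\e\to 0$ and then let $a\to+\infty$.

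First I would recall (this is the classical Cioranescu--Murat result, proved in the very same framework as for the torsion problem) that the first Dirichlet eigenvalues $\lambda_1(\O_\e)$ converge to the first eigenvalue $\lambda_1^*$ of the homogenized operator $-\Delta + a$ on $\O$ with Dirichlet boundary conditions, i.e.\
\begin{equation}\label{eigen-homog}
\lambda_1(\O_\e)\to \lambda_1^* = \lambda_1(\O) + a\,,
\end{equation}
where the additive shift by $a$ comes from the ``strange term'' $a u^*$, since for the operator $-\Delta + a$ the Rayleigh quotient is simply shifted by the constant $a$. Combining this with the convergence of $M(\O_\e)$ and using the Lemma's bound $M(\O_\e)\to\|u^*\|_{L^\infty}=\|v^*\|_{L^\infty}/a\leq 1/a$ (with $v^*=au^*$ as in \eqref{max4}), I would write
\begin{equation}\label{Geps-bound}
\limsup_\e G(\O_\e)=\Big(\lim_\e M(\O_\e)\Big)\Big(\lim_\e\lambda_1(\O_\e)\Big)=\|u^*\|_{L^\infty}\,(\lambda_1(\O)+a)\leq \frac{\lambda_1(\O)+a}{a}\,.
\end{equation}

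The remaining step is to show this upper bound tends to $1$ in the appropriate limit, which requires also a matching lower bound to force $G(\O_\e)\to 1$ exactly. For the lower bound I would invoke the universal estimate $G\geq 1$ already established just above the statement, so that $1\leq\liminf_\e G(\O_\e)\leq\limsup_\e G(\O_\e)\leq (\lambda_1(\O)+a)/a$. Letting $a\to+\infty$ (which, by \eqref{re} and \eqref{constant-a}, is achieved by sending the constant $C_0\to 0$ in dimension $d\geq 3$ and $C_0\to 0$ in $d=2$) makes the right-hand side tend to $1$, and a diagonal extraction over a sequence $a_k\to\infty$ with the corresponding $\e_k$ produces a single sequence of domains with $G\to 1$. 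The main obstacle I anticipate is justifying \eqref{eigen-homog} rigorously in the $L^\infty$/pointwise sense needed to multiply the two limits: the product of limits in \eqref{Geps-bound} is legitimate only because $M(\O_\e)$ and $\lambda_1(\O_\e)$ converge \emph{separately}, so I must confirm that the spectral convergence of Cioranescu--Murat holds along the \emph{same} subsequence for which Theorem \ref{convergence of the norm} gives $M(\O_\e)\to\|u^*\|_{L^\infty}$; since both convergences hold for the full sequence (spectral convergence is not merely up-to-subsequence), passing to the torsion subsequence is harmless, and this is where I would be most careful in writing the details.
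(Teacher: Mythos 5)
Your proposal follows essentially the same route as the paper's proof: the same Cioranescu--Murat perforated domains, the homogenized spectral convergence $\lambda_1(\O_\e)\to\lambda_1(\O)+a$ (which the paper justifies precisely via resolvent convergence from \cite{Kac-Mur} combined with \cite[Theorem 2.3.2]{H}), the bound $\liminf_\e M(\O_\e)\leq 1/a$ from \eqref{max4}, the trivial lower bound $G\geq 1$, and finally letting $a\to+\infty$ through the choice of $C_0$. The only slip is the direction of that choice: by \eqref{constant-a} one has $a\propto C_0^{d-2}$ for $d\geq 3$, so making $a$ large requires $C_0\to+\infty$ in that case ($C_0\to 0$ is correct only for $d=2$); this is immaterial to the structure of the argument.
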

\begin{proof}
Let $C_0>0$ be fixed. For every $\e>0$, consider the perforated domain $\Omega_\e$ defined in \eqref{oe}, obtained by removing to a given regular set $\Omega$ periodic spherical holes of period $2\e$ and radius $r_\e$ (function of $C_0$, see \eqref{re}).
Let $A_\e:L^2(\O)\to L^2(\O)$ be the resolvent operator of the Dirichlet Laplacian on $\Omega_\e$, which associates to $f\in L^2(\O)$ the unique solution $u\in H^1_0(\Omega_\e)$ to $-\Delta u = f$, extended by zero outside $\Omega_\e$. 

By applying Theorem 2.5 in \cite{Kac-Mur}, we infer that, for every $f\in L^2(\O)$, $A_\e(f)$ strongly converges to $A(f)$ in $L^2(\O)$, where $A$ is the resolvent operator of $ -\Delta + a$ in $H^1(\O)$ with Dirichlet boundary conditions, being $a$ (function of $C_0$) defined in \eqref{constant-a}. 
In particular, in view of \cite[Theorem 2.3.2]{H}, the eigenvalues of $A_\e$ converge to the corresponding eigenvalue of $A$; in other words, we have
$$
\lambda_1(\Omega_\e)\to \lambda_1(\O) + a\,,
$$
as $\e\to 0$. On the other hand, as already noticed in \eqref{max4},
we have $\liminf_\e M(\O_\e)\leq \frac{1}{a}$.
Thus 
\begin{equation}\label{rhs}
\liminf_\e G(\O_\e) = \liminf_\e \lambda_1(\Omega_\e) M(\O_\e) \leq \frac{1}{a}  ( \lambda_1(\O) + a) = 1 + \frac{\lambda_1(\O)}{a}\,.
\end{equation}
By choosing a suitable $C_0$ (vanishing in the case of $d=2$ and diverging to $+\infty$ in the case of $d\geq 3$), the parameter $a$ can be taken arbitrarily large, so that the right-hand side of \eqref{rhs} is arbitrarily close to 1.
This fact, together with the trivial lower bound $G\geq 1$, concludes the proof.
\end{proof}

\subsection{The lower bound for convex sets}

In the convex setting, the optimal lower bound for $G$ was provided by Payne in 1981: for every bounded convex domain $\Omega$ of $\mathbb R^d$, we have
\begin{equation}\label{lbGconvex}
G(\Omega)\geq \frac{\pi^2}{8}\,,
\end{equation}
and the inequality is sharp (see Theorem I and formula (3.12) in \cite{Pay3}). 

The optimality of the constant can be checked, e.g., by considering the sequence of parallelepipeds $\Omega_n:=(-n,n)^{d-1}\times (0,1)$. Indeed, as already seen in the proof of Theorem \ref{theoubconvex}, by comparing the torsion function of $\Omega_n$ with the function $x_{d}(1-x_d)/2$ we get 
\begin{equation}\label{emme}
M(\Omega_n)\leq 1/8\,;
\end{equation}
on the other hand, recalling the definition \eqref{psin} of $\psi_n$ and taking
$$
v(x_1,\ldots,x_d):= \sin(\pi x_d) \Pi^{d-1}_{j=1} \psi_n(x_j) \in H^1_0(\Omega_n)
$$
as test function in the variational formulation \eqref{varform} of $\lambda_1(\Omega_n)$, we get
\begin{equation}\label{lambda}
\lambda_1(\Omega_n) \leq \frac{\int_{\Omega_n} |\nabla v|^2\, \dx}{\int_{\Omega_n} v^2\, \dx} = \frac{d-1}{n-2/3} + \pi^2\,.
\end{equation}
From \eqref{emme} and \eqref{lambda} we obtain the inequality
$$
G(\Omega_n) \leq  \frac{\pi^2}{8} + \frac{d-1}{8(n-2/3)}\,,
$$
whose right-hand side is arbitrarily close to $\pi^2/8$ as $n\to +\infty$.

\subsection{Optimality conditions via shape derivatives.}\label{ss-sd}

In this section we derive optimality conditions by computing the first order shape derivative of $G$. Namely, given $\Omega\subset \mathbb R^d$ bounded, open, regular or convex, connected set, we study the limit (when the latter exists)
$$
G'(\Omega,V):= \lim_{t\to 0}\frac{G(\Omega_t) - G(\Omega)}{t}\,, 
$$
with $\Omega_t:= (I + t V)(\Omega)$, $I$ being the identity map and $V:\re^d \to \re^d$ an arbitrary $C^1$ vector field.

Recalling that $G(\Omega)=M(\Omega)\lambda_1(\Omega)$, if the shape derivative exists, it reads
\begin{equation}\label{G'}
G'(\Omega,V) = M'(\Omega,V) \lambda_1(\Omega) + M(\Omega) \lambda_1'(\Omega,V)\,.
\end{equation}
It is well known (see, e.g., \cite[Th\'eor\`eme 5.7.1]{HP}) that
\begin{equation}\label{L'}
\lambda_1'(\Omega,V) = - \int_{\partial \Omega} \Big(\frac{\partial \varphi}{\partial n}\Big)^2 V\cdot n \, \dhn\,,
\end{equation}
where  $\varphi$ is the (normalized) first eigenfunction and $n$ denotes the unit outer normal to $\partial \Omega$.
Remark that $\frac{\partial \varphi}{\partial n}$ is well defined as soon as $\Omega$ is regular or convex, since $\varphi \in H^2(\Omega)$
in that case.

The computation of $M'$ is more delicate and requires additional assumptions. 

\begin{proposition}\label{prop-M'}
Let $\Omega\subset \mathbb R^d$ be a bounded open convex set.
Then, for every $V\in C^1(\mathbb R^d)$, the shape derivative of $M$ at $\Omega$ in direction $V$ exists and is given by
$$
M'(\Omega,V) = u'(x_0)\,,
$$
where $x_0$ is a maximum point for $u$ and  $u'$ is the solution of 
$$
\left\{\begin{array}{lll}
\Delta u' = 0 \quad  \hbox{in }\Omega
\\
u' +  \nabla u \cdot V \in H^1_0(\Omega)\,.
\end{array}
\right.
$$
\end{proposition}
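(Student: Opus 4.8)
The plan is to combine the classical shape-differentiability of the torsion function with an envelope (Danskin-type) argument that crucially exploits the uniqueness of the maximum point of $u$ on a convex set.

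First I would identify the function $u'$ in the statement as the Eulerian shape derivative of the torsion function. Writing $u_t$ for the torsion function of $\Omega_t=(I+tV)(\Omega)$ and differentiating $-\Delta u_t = 1$, whose right-hand side does not depend on $t$, one finds that $u':=\lim_{t\to 0}(u_t-u)/t$ is harmonic, $\Delta u'=0$ in $\Omega$; the Dirichlet condition $u_t=0$ on $\partial\Omega_t$ differentiates to $u'=-(\partial u/\partial n)(V\cdot n)$ on $\partial\Omega$, which — since $u\equiv 0$ on $\partial\Omega$ forces $\nabla u=(\partial u/\partial n)\,n$ there — is exactly the boundary condition $u'+\nabla u\cdot V\in H^1_0(\Omega)$ appearing in the statement. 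The existence of $u'$ and the $L^2$-convergence of the difference quotients $(u_t-u)/t\to u'$ are standard (see, e.g., \cite[Chapitre 5]{HP}).

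Second I would upgrade this to locally uniform convergence near the interior maximizer. The key remark is that $u_t-u$ is harmonic on the overlap $\Omega\cap\Omega_t$, since both solve $-\Delta w=1$ there. Fixing a compact neighborhood $K\Subset\Omega$ of $x_0$, one has $K\Subset\Omega_t$ for $t$ small, and interior (mean-value) estimates for the harmonic functions $(u_t-u)/t$ convert the $L^2$-convergence into $C^2(K)$-convergence; in particular $(u_t-u)/t\to u'$ uniformly on $K$, with $u'$ continuous there. Third, I would invoke the uniqueness of the maximum point, which follows from the strict concavity of $\sqrt{u}$ on convex $\Omega$ (recalled earlier, see \cite{Kor-Le}): $u$ has a single critical point $x_0$, its maximizer. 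Letting $x_t$ be a maximizer of $u_t$, so $M(\Omega_t)=u_t(x_t)$, the bound $u_t(x_t)\ge u_t(x_0)\to M(\Omega)>0$ together with a uniform barrier (comparison with a strip, as in the proof of Theorem \ref{theoubconvex}) keeps $x_t$ away from the boundaries; combined with the uniqueness of $x_0$ and the local uniform convergence $u_t\to u$, this forces $x_t\to x_0$, hence $x_t\in K$ for $t$ small.

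Finally comes the envelope argument. Using $u(x_t)\le u(x_0)=M(\Omega)$ and $u_t(x_0)\le M(\Omega_t)$, I would sandwich
$$
u_t(x_0)-u(x_0)\;\le\;M(\Omega_t)-M(\Omega)\;\le\;u_t(x_t)-u(x_t)\,.
$$
Dividing by $t>0$ and letting $t\to0^+$, the left side tends to $u'(x_0)$ while the right side tends to $\lim_t u'(x_t)=u'(x_0)$ by the second and third steps; dividing by $t<0$ reverses the inequalities and produces the same limit. Hence $M'(\Omega,V)$ exists and equals $u'(x_0)$. The main obstacle is exactly that the maximizer $x_t$ need not depend differentiably on $t$, so one cannot directly differentiate $t\mapsto u_t(x_t)$; the envelope sandwich circumvents this, and the two-sided (rather than merely one-sided) derivative is obtained precisely because the maximizer of $u$ is unique — which is where the convexity of $\Omega$ is essential.
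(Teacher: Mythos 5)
Your proof is correct, but it follows a genuinely different route from the paper's. The paper makes the map $t\mapsto x_t$ of maximum points \emph{differentiable}: it applies the implicit function theorem to $\psi(t,x):=\nabla u_t(x)$ at $(0,x_0)$, using the full strength of the Korevaar--Lewis result (the Hessian of $\sqrt{u}$ is negative definite, hence $\mathrm{Hess}\,u(x_0)$ is invertible), and then splits the difference quotient as
$$
\frac{M(\Omega_t)-M(\Omega)}{t}=\frac{u_t(x_t)-u_t(x_0)}{t}+\frac{u_t(x_0)-u(x_0)}{t}\,,
$$
showing the first term vanishes (mean-value property of the harmonic components of $\nabla u_t$, plus differentiability of $t\mapsto x_t$) and the second tends to $u'(x_0)$ (mean-value property of the harmonic function $u_t-u$, plus $L^2$-differentiability of $t\mapsto u_t$ from \cite[Chapter 5]{HP}). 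You instead avoid any differentiability of $x_t$ via the sandwich $u_t(x_0)-u(x_0)\le M(\Omega_t)-M(\Omega)\le u_t(x_t)-u(x_t)$, needing only $x_t\to x_0$ and the locally uniform convergence of $(u_t-u)/t\to u'$ (which you obtain, as the paper does for its claims, from harmonicity and interior estimates). Your route is arguably more robust and economical: it uses convexity only through the \emph{uniqueness} of the maximum point (a weaker consequence of the concavity of $\sqrt u$ than Hessian non-degeneracy), and it sidesteps the joint regularity of $(t,x)\mapsto\nabla u_t(x)$ needed to legitimately invoke the implicit function theorem --- a point the paper passes over quickly. What the paper's approach buys in exchange is stronger information, namely a differentiable path $t\mapsto x_t$ of maximizers, which your squeeze argument neither needs nor provides; your observation that uniqueness of the maximizer is exactly what turns the Danskin-type one-sided derivatives into a two-sided derivative is accurate and is the right way to see why convexity enters.
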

\begin{proof}
Let $u_t$ denote the torsion function of $\Omega_t$, for $t>0$, and consider the function $\psi:\mathbb R \times \mathbb R^d \to \mathbb R^d$ defined as
$$
(t,x) \mapsto \psi(t,x):= \nabla u_t(x)\,.
$$
By optimality of $x_0$ for $u$, we have $\psi(0,x_0)= 0$. Moreover, the matrix $D_x \psi(0,x_0)$ is invertible: indeed, setting $v:=\sqrt{u}$, we have
$$
D_x \psi(0,x_0)=Hess_u(x_0) = 2 \sqrt{M(\Omega)} Hess_v(x_0)\,,
$$
and the matrix $Hess_v$ is negative definite everywhere in $\Omega$ (see \cite{Kor-Le}).
Thus, by the implicit function theorem, we infer that, in a neighborhood of $x_0$, for $t$ small enough, there exists a unique $x_t$ such that $\nabla u_t(x_t)=0$; furthermore, $t\mapsto x_t$ is differentiable. 

Note that the critical point $x_t$ of $u_t$ must be a maximum point, so that $M(\Omega_t) = u_t(x_t)$.

We claim that, as $t\to 0$,
\begin{align}
& \frac{u_t(x_0) - u(x_0)}{t}  \to u'(x_0) \,,\label{claim-q1}
\\
& \frac{u_t(x_t) - u_t(x_0)}{t}  \to 0 \,.\label{claim-q2}
\end{align}
Once proved the claims we are done, indeed we have
$$
\frac{M(\Omega_t) - M(\Omega)}{t} = \frac{u_t(x_t) - u(x_0)}{t}  = \frac{u_t(x_t) - u_t(x_0)}{t}  +  \frac{u_t(x_0) - u(x_0)}{t}  \to u'(x_0) \,,
$$
which concludes the proof.

Assertion \eqref{claim-q1} follows by applying the mean value property to the harmonic functions $u_t - u$ and $u'$: choose $R>0$ such that $B_R(x_0)\subset \Omega_t$ for every $t<<1$, then we have
\begin{align*}
\Big|\frac{u_t(x_0) - u(x_0)}{t} - u'(x_0)\Big| &= \Big| \mint_{B_R(x_0)} \frac{u_t(x) - u(x)}{t} - u'(x_0) \, \d x\Big|  \leq C \| (u_t - u)/t - u'\|_{L^2(B_R(x_0))}\,.
\end{align*}
The right-hand side vanishes as $t\to 0$, since the map $t\mapsto u_t \in L^2(\mathbb R^d)$ is differentiable at $0$ with derivative $\frac{\d}{\d t} u \lfloor_{t=0}=u'$ (see, for instance, \cite[Chapter 5]{HP}).

Similarly, property \eqref{claim-q2} follows by combining the mean value property of $\nabla u_t$, the differentiability of $t\mapsto x_t$, and the strong convergence of $u_t$ to $u$ in $H^1(\mathbb R^d)$: 
\begin{align*}
\frac{u_t(x_t) - u_t(x_0)}{t}  & = \nabla u_t (\xi_t) \cdot \frac{x_t - x_0}{t} =\Big( \mint_{B_R(\xi_t)} \nabla u_t(x)\, \d x  \Big) \cdot \frac{x_t - x_0}{t} 
\\ & \longrightarrow \nabla u(x_0) \cdot v_0 = 0\,, \quad \hbox{as }t\to 0\,,
\end{align*}
with $\xi_t$ a suitable intermediate point between $x_0$ and $x_t$, $R$ a positive radius such that $B_R(\xi_t)\subset \Omega_t$ for every $t<<1$, and $v_0$ the derivative $\frac{\d}{\d t} x_t \lfloor_{t=0}$.
\end{proof}

\begin{theorem}\label{thm-G'}
Let $\Omega\subset \mathbb R^d$ be a bounded open convex set. Then, for every $V\in C^1(\mathbb R^d)$, the shape derivative of $G$ at $\Omega$ in direction $V$ exists and is given by
\begin{equation}\label{G'2}
G'(\Omega,V) =\int_{\partial \Omega}   \Big[   \lambda_1(\Omega) \frac{\partial u}{\partial n}\,\frac{\partial \phi_{x_0}}{\partial n} - M(\Omega) \Big(\frac{\partial \varphi}{\partial n}\Big)^2 \Big]  V\cdot n\, \dhn\,,
\end{equation}
where $x_0\in \Omega$ is a maximum point of $u$ and $\phi_{x_0}$ is the (Green function) solution of
$$
\left\{\begin{array}{lll}
-\Delta \phi_{x_0} = \delta_{x_0}\quad &\hbox{in }\mathcal D'(\Omega)
\\
\phi_{x_0} = 0 \quad & \hbox{on }\partial \Omega\,.
\end{array}
\right.
$$
\end{theorem}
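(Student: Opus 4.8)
The plan is to assemble the three ingredients already in place and perform a single additional manipulation. The product rule \eqref{G'} reduces the existence and the value of $G'(\Omega,V)$ to those of $M'$ and $\lambda_1'$; the eigenvalue derivative is classical and given by \eqref{L'}, valid here because $\Omega$ convex forces $\varphi\in H^2(\Omega)$, so that $\partial\varphi/\partial n$ is well defined; and Proposition \ref{prop-M'} already yields $M'(\Omega,V)=u'(x_0)$. Hence the only genuinely new task is to rewrite the pointwise value $u'(x_0)$ of the harmonic function $u'$ as a boundary integral displaying the factor $V\cdot n$, after which the stated formula \eqref{G'2} follows by substitution.

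First I would make the boundary datum of $u'$ explicit. Since $u\equiv 0$ on $\partial\Omega$, the tangential component of $\nabla u$ vanishes on the boundary, so $\nabla u=\frac{\partial u}{\partial n}\,n$ there and consequently $\nabla u\cdot V=\frac{\partial u}{\partial n}\,(V\cdot n)$ on $\partial\Omega$. The condition $u'+\nabla u\cdot V\in H^1_0(\Omega)$ of Proposition \ref{prop-M'} then identifies the trace of $u'$ as the Dirichlet datum
\begin{equation*}
u' = -\frac{\partial u}{\partial n}\,(V\cdot n)\qquad\text{on }\partial\Omega\,.
\end{equation*}

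Next I would recover $u'(x_0)$ via the Green representation formula. Applying Green's second identity to the harmonic function $u'$ and to the Green function $\phi_{x_0}$ (excising a small ball $B_\rho(x_0)$ to accommodate the singularity of $\phi_{x_0}$ and letting $\rho\to 0$, which produces the point mass $\delta_{x_0}$ and makes the contribution on $\partial B_\rho(x_0)$ converge to $u'(x_0)$), and using $\phi_{x_0}=0$ on $\partial\Omega$, one obtains
\begin{equation*}
u'(x_0) = -\int_{\partial\Omega} u'\,\frac{\partial \phi_{x_0}}{\partial n}\,\dhn
= \int_{\partial\Omega} \frac{\partial u}{\partial n}\,\frac{\partial \phi_{x_0}}{\partial n}\,(V\cdot n)\,\dhn\,,
\end{equation*}
where the second equality inserts the boundary datum above. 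Substituting this expression for $M'(\Omega,V)=u'(x_0)$ together with \eqref{L'} into \eqref{G'} and collecting the two boundary integrals yields exactly \eqref{G'2}.

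The main obstacle is the rigorous justification of the Green representation and of the boundary integrals under the available regularity. Because $\Omega$ is convex one has $u\in H^2(\Omega)$, so $\partial u/\partial n\in L^2(\partial\Omega)$ is well defined and the trace of $u'$ belongs to $H^{1/2}(\partial\Omega)$; the function $\phi_{x_0}$ is harmonic, hence smooth, up to the boundary away from $x_0$, so its normal derivative $\partial\phi_{x_0}/\partial n$ is well defined and the integrand in \eqref{G'2} is integrable. The delicate points are therefore the limiting argument in Green's identity near the singularity $x_0$ (standard, since the divergent surface term over $\partial B_\rho(x_0)$ is compensated exactly by the fundamental-solution behaviour of $\phi_{x_0}$) and the legitimacy of using the trace identification of $u'$ inside the boundary integral, both of which are routine once the $H^2$/harmonic regularity above is invoked.
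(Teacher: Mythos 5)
Your proposal is correct and follows essentially the same route as the paper: both reduce the problem via \eqref{G'} and \eqref{L'} to rewriting $M'(\Omega,V)=u'(x_0)$ as a boundary integral, identify the trace $u'=-\frac{\partial u}{\partial n}\,(V\cdot n)$ on $\partial\Omega$ from the condition $u'+\nabla u\cdot V\in H^1_0(\Omega)$, and use the Green function representation $u'(x_0)=\langle-\Delta\phi_{x_0},u'\rangle=-\int_{\partial\Omega}u'\,\frac{\partial\phi_{x_0}}{\partial n}\,\dhn$. Your excision argument near $x_0$ is simply the standard justification of the distributional pairing the paper writes directly.
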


\begin{proof}
First, we rewrite in terms of $\phi_{x_0}$ the derivative $M'(\Omega,V)$, whose existence is ensured by Proposition \ref{prop-M'}:
\begin{equation}\label{M'}
M'(\Omega,V) = \langle -\Delta \phi_{x_0}, u'\rangle =  - \int_{\partial \Omega} u' \frac{\partial \phi_{x_0}}{\partial n}\, \dhn  = \int_{\partial \Omega}  \frac{\partial u}{\partial n}\,\frac{\partial \phi_{x_0}}{\partial n}\,V\cdot n\, \dhn \,. 
\end{equation}
Formula \eqref{G'2} follows by combining \eqref{G'}, \eqref{L'}, and \eqref{M'}.
\end{proof}

As a consequence of Theorem \ref{thm-G'} we obtain the following optimality condition: if $\Omega\subset \mathbb R^d$ bounded open convex set is a critical shape for $G$, then 
\begin{equation}\label{opt-cond}
 \lambda_1(\Omega) \frac{\partial u}{\partial n}\,\frac{\partial \phi_{x_0}}{\partial n} - M(\Omega) \Big(\frac{\partial \varphi}{\partial n}\Big)^2  = 0 \quad \hbox{a.e. on } \partial \Omega\,.
\end{equation}

\begin{remark}
Notice that the equality \eqref{opt-cond} is satisfied in average over $\partial \Omega$: indeed this corresponds to check that a deformation field $V$ which agrees with $n$ on $\partial \Omega$ does not change the functional, which is true since the functional $G$ is scale invariant.  
\end{remark}

Now we partially answer a question raised in \cite{vdB} where the author asked whether the disk could be the maximizer for $G$.
\begin{corollary}\label{cor-T}
The equilateral triangle gives a better value than the disk; however it is not a critical shape for $G$.
\end{corollary}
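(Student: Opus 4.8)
The plan is to separate the statement into two independent claims: the numerical comparison $G(T)>G(\text{disk})$, and the failure of the optimality condition \eqref{opt-cond} at the equilateral triangle. For the comparison I would use that both competitors have explicit data. For the disk $B_R$ one has the torsion function $u(x)=(R^2-|x|^2)/4$, so $M=R^2/4$, while $\lambda_1=j_{0,1}^2/R^2$ with $j_{0,1}$ the first positive zero of $J_0$; hence $G(\text{disk})=j_{0,1}^2/4\approx 1.446$. For the equilateral triangle, writing $\ell_1,\ell_2,\ell_3$ for the inward (positive) distances to the three sides and $r$ for the inradius (so that $\ell_1+\ell_2+\ell_3=3r=h$ is the altitude, by Viviani), a direct computation shows that $u=\ell_1\ell_2\ell_3/(3r)$ solves $-\Delta u=1$ with $u=0$ on $\partial\Omega$, whence $M=u(x_0)=r^2/3$ at the centroid $x_0$; likewise the classical Lamé ground state $\varphi=\sum_i\sin(2\pi\ell_i/h)$ vanishes on each side and satisfies $-\Delta\varphi=(2\pi/h)^2\varphi$, giving $\lambda_1=4\pi^2/(9r^2)$. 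Therefore $G(T)=4\pi^2/27\approx 1.462>G(\text{disk})$, settling the first claim. (Since the triangle is convex and convex polygons enjoy $H^2$ regularity, with $x_0$ an interior point, Proposition \ref{prop-M'} and Theorem \ref{thm-G'} apply, so \eqref{opt-cond} is a genuine necessary condition here.)

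For the second claim I would argue by contradiction: if $T$ were critical, then by \eqref{opt-cond} one would have $\lambda_1\,\frac{\partial u}{\partial n}\,\frac{\partial\phi_{x_0}}{\partial n}=M\big(\frac{\partial\varphi}{\partial n}\big)^2$ a.e.\ on $\partial T$, with $x_0$ the centroid. The key idea is to compare the \emph{orders of vanishing} of the two sides of this identity at a vertex. Fixing one side and letting $\delta$ denote the distance to one of its endpoints, the explicit formulas give
$$
\frac{\partial u}{\partial n}\sim \delta,\qquad \frac{\partial\varphi}{\partial n}\sim \delta^2,
$$
i.e.\ $\partial u/\partial n$ vanishes to order exactly one (its edge profile is the parabola $(x^2-3r^2)/(4r)$, with simple zeros at the vertices), while $\partial\varphi/\partial n$ vanishes to order exactly two, reflecting the $\rho^{\pi/\gamma}=\rho^3$ behaviour of the eigenfunction at a corner of opening $\gamma=\pi/3$. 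Consequently the right-hand side vanishes to order exactly four, and for the identity to hold the left-hand side would have to vanish to order four as well, forcing $\partial\phi_{x_0}/\partial n$ to vanish to order exactly three along the edge.

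The crux is to show this is impossible. Since the pole of $\phi_{x_0}$ sits at the centroid, $\phi_{x_0}$ is harmonic in a neighborhood of each vertex and vanishes on the two adjacent edges; Kondrat'ev corner theory then gives an asymptotic expansion in the wedge harmonics $\rho^{3k}\sin(3k\theta)$, $k\geq 1$, so the admissible orders of vanishing of $\partial\phi_{x_0}/\partial n$ along an edge lie in $\{2,5,8,\dots\}$ and can never equal three. Hence the two sides of \eqref{opt-cond} vanish at incompatible rates near every vertex, their difference is nonzero on an interval adjoining each vertex (of positive measure), and choosing a deformation field $V$ with $V\cdot n$ supported there yields $G'(T,V)\neq 0$, so $T$ is not critical. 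I expect the main obstacle to be the rigorous justification of these corner asymptotics for the Green's function $\phi_{x_0}$ — specifically confirming that the exponent set of its expansion is exactly $\{3k\}$ (so that $\partial\phi_{x_0}/\partial n$ vanishes to an order in $\{2,5,8,\dots\}$, never to order three) — which requires the elliptic regularity theory for the Laplacian on polygonal domains together with the positivity of $\phi_{x_0}$ near the corner to control the leading coefficient.
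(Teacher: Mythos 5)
Your proposal is correct, and while your first claim is settled exactly as in the paper (explicit formulas giving $G(T)=4\pi^2/27\simeq 1.4622$ versus $G(\mathbb D)=j_{0,1}^2/4\simeq 1.4458$; your $u=\ell_1\ell_2\ell_3/(3r)$ and $\lambda_1=4\pi^2/(9r^2)$ agree with the paper's Cartesian expressions), your non-criticality argument takes a genuinely different route. The paper also starts from \eqref{opt-cond}, but argues globally: it rewrites the condition as $\partial\phi_{x_0}/\partial n=h$ on $\partial T$, pairs it with the harmonic test functions $w\equiv 1$ and $w=\mathrm{Re}(z^6)$ through Green's identity $-w(0)=\int_{\partial T}wh\,\dh$, reduces by symmetry to two explicit trigonometric integrals over one edge, and reaches a contradiction because the resulting identity $-\tfrac{13}{80}+\tfrac{15}{64\pi^2}+\tfrac{189}{128\pi^4}=-\tfrac18$ would make $\pi$ algebraic. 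Your argument is instead local at the vertices: $\partial u/\partial n$ has a simple zero there (edge profile $\tfrac{\sqrt3}{2}(x^2-\tfrac14)$, as you say), $\partial\varphi/\partial n$ a double zero (along an edge it is a nonzero constant times $1-\cos(2\pi\ell/h)$, $\ell$ the distance to the vertex, so the order is exactly two), hence the right side of \eqref{opt-cond} vanishes to order exactly four while the left side vanishes to order $1+m$, with $m$ the vanishing order of $\partial\phi_{x_0}/\partial n$, and your corner expansion gives $m\in\{2,5,8,\dots\}\neq 3$. On the step you flag as the main obstacle: for the angle $\pi/3$ no Kondrat'ev machinery is needed, since a six-fold odd (Schwarz) reflection about the two edges extends $\phi_{x_0}$ to a bounded harmonic function on a punctured disk around the vertex, the singularity is removable, and oddness under both reflections forces the Taylor series to consist exactly of the terms $\mathrm{Im}(\zeta^{3k})=\rho^{3k}\sin(3k\theta)$, with not all coefficients zero by unique continuation; moreover positivity of the Green function forces $k=1$, so in fact $m=2$ and the left side vanishes to order exactly three, which finishes the proof even more directly. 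In comparison: the paper's proof is purely computational once the explicit formulas are known, but hinges on a fortunate test function and on the transcendence of $\pi$; yours is structural, shows that \eqref{opt-cond} fails on a whole interval adjoining each vertex, and would extend to other corners producing an order mismatch, at the price of the corner-asymptotics justification above and of checking that the leading coefficients of $\partial u/\partial n$ and $\partial\varphi/\partial n$ are nonzero (which here follows from the explicit formulas).
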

\begin{proof}
Let $T\subset \mathbb R^2$ be the equilateral triangle with side of length 1 and vertexes in $(-1/2, -1/(2\sqrt{3}))$, $(1/2, - 1/(2\sqrt{3}))$, and $(0, \sqrt{3})$, so that the center is at the origin. In this case, $u$, $\varphi$, $M$, and $\lambda_1$ can be explicitly computed and read
\begin{align*}
u(x,y) & = \frac{1}{2\sqrt{3}} \Big(y+\frac{1}{2\sqrt{3}}\Big)  \Big(y - \sqrt{3} x - \frac{1}{\sqrt{3}}\Big)   \Big(y+ \sqrt{3}x -\frac{1}{\sqrt{3}}\Big)  
\\
& = \frac{1}{2\sqrt{3}} \Big(y^3 - 3 x^2 y - \frac{\sqrt{3}}{2} y^2  - \frac{\sqrt{3}}{2} x^2 + \frac{1}{6\sqrt{3}}\Big)
\\
\varphi(x,y) &= \Big(\frac{2}{\sqrt{3}}\Big)^{3/2} \Big[    \sin\Big(  \frac{4\pi}{3} \big( 1- \sqrt{3} y\big)\Big) - 2 \cos(2\pi x)    \sin\Big(  \frac{2\pi}{3} \big( 1- \sqrt{3} y\big)\Big)  \Big]
\\
M(T) &= u(0,0) = \frac{1}{36}\,,\quad \lambda_1(T) = \frac{16}{3}\pi^2\,.
\end{align*}
Therefore $G(T)=4\pi^2/27\simeq 1.4622$ while, for the unit disk $\mathbb{D}$, we have $G(\mathbb{D})=j_{0,1}^2/4 \simeq 1.4458$
which proves the first part of the claim.

Now assume by contradiction that $T$ is a critical shape for $G$. Since the normal derivative of $u$ on $\partial T$ is never zero except at the vertices (where both $\nabla u$ and $\nabla \varphi$ vanish), we may recast the optimality condition \eqref{opt-cond} as
\begin{equation}\label{opt-cond2}
\frac{\partial \phi_{x_0}}{\partial n} =  h  \quad \hbox{a.e. on } \partial T\,,
\end{equation}
with $h:= M(\Omega) \big( \lambda_1(\Omega) \big)^{-1}\big(\frac{\partial u}{\partial n}\big)^{-1} \big(\frac{\partial \varphi}{\partial n}\Big)^2 $. 
In particular, if we multiply both sides by an arbitrary harmonic function $w$ and integrate over $\partial T$, we obtain
\begin{equation}\label{contrad}
- w(0) = \int_{\partial T} w h\, \dh\,.
\end{equation}
By taking as test functions $w\equiv 1$ and $w = Re(z^6)$, we get
$$
\left\{
\begin{array}{lll}
\displaystyle{\int_{\partial T} h \, \dh= - 1}
\medskip
\\
\displaystyle{\int_{\partial T} Re(z^6) h\, \dh  = 0\,.}
\end{array}
\right.
$$
Exploiting the symmetry of the domain and of the functions involved, these conditions can be rephrased as follows:
$$
\left\{
\begin{array}{lll}
\displaystyle{\int_{\Sigma} h \, \dh= - \frac{1}{3}}
\medskip
\\
\displaystyle{\int_{\Sigma} Re(z^6) h\, \dh  = 0\,,}
\end{array}
\right.
$$
where $\Sigma$ denotes the basis of the triangle, i.e. the segment $\Sigma = [-1/2,1/2]\times\{-1/(2\sqrt{3})\}$.
This system may be rewritten as 
\begin{equation}\label{T}
\left\{
\begin{array}{lll}
\displaystyle{\sigma:=\int_0^{1/2} \,\frac{(1 + \cos(2\pi x))^2}{x^2 - 1/4}\, \dx = - \frac{27}{ 8}}
\medskip
\\
\displaystyle{\tau:=\int_0^{1/2} P (x) \frac{(1 + \cos(2\pi x))^2}{x^2 - 1/4}\, \dx = 0 \,,}
\end{array}
\right.
\end{equation}
with
\begin{align*}
P(x)  : &=Re(z^6){\lfloor_{\Sigma}} = (x^6 - 15 x^4 y^2 + 15 x^2 y^4 - y^6 ) \lfloor_{y=-1/(2\sqrt{3})}= x^6 - \frac{5}{4} x^4  + \frac{5}{48} x^2 - \frac{1}{1728}\,.\end{align*}
Making use of the factorization
$$
P(x)=  \big(x^2 - \frac14\big) \big(x^4  - x^2    - \frac{7}{48}\big) - \frac{1}{27} \,,
$$
we obtain 
$$
\tau= \int_0^{1/2} \big(x^4  - x^2    - \frac{7}{48}\big) (1 + \cos(2\pi x))^2\, \dx - \frac{\sigma}{27}\,.
$$
It is easy to check that
\begin{align*}
& \int_0^{1/2} (1 + \cos(2\pi x))^2\, \dx = \frac34\,;
\\
& \int_0^{1/2} x^2 (1+\cos (2\pi x))^2\, \dx = \frac{1}{16} - \frac{15}{32 \pi^2}\,;
\\
& \int_0^{1/2} x^4 (1+\cos(2\pi x))^2\, \dx = \frac{3}{320} - \frac{15}{64 \pi^2} + \frac{189}{128 \pi^4} \,.
\end{align*}
Thus we get
\begin{align*}
\tau + \frac{\sigma}{27}  & = \frac{3}{320} - \frac{15}{64 \pi^2} + \frac{189}{128 \pi^4} -    \frac{1}{16}  +  \frac{15}{32 \pi^2} - \frac{7}{48}\cdot\frac34 \\
& =
\frac{3}{320}  -    \frac{1}{16}   - \frac{7}{64} 
+  \frac{15}{32 \pi^2} - \frac{15}{64 \pi^2}
+ \frac{189}{128 \pi^4}
\\
& =
- \frac{13 }{80}  
+  \frac{15}{64 \pi^2}
+ \frac{189}{128 \pi^4}\,.
\end{align*}
Since $\pi$ is not algebraic, the last relation is in contradiction with \eqref{T}, which in turn implies $\tau + \sigma/27= -1/8$.
Therefore we conclude that the equilateral triangle is not a critical shape. 
\end{proof}
\begin{remark}
We point out that the choice of any test function of the form $w=Re z^n$, for $n=1,\ldots,5$, in \eqref{contrad} does not provide any contradiction. Moreover the numerical values of $\sigma$ and $\tau$ defined in the above proof are not so far
of what appears in \eqref{T}. In some sense, the equilateral triangle is not far from being a critical point.
\end{remark}

We conclude the Section with the proof of Proposition \ref{top-der}.
\begin{proof}[Proof of Proposition \ref{top-der}]
Assume by contradiction that $\O$ is a maximizer for $G$. 
Given $x\in \O$ and $\e>0$ a small parameter, we set $\O_\e(x)$ the perforated domain $\O \setminus \overline{B}_\e(x)$ and we denote by $u_{\e,x}$ the associated torsion function.

In the limit as $\e\to 0$, we have the following asymptotic expansions for $\lambda_1(\O_\e(x))$ and $u_{\e,x}$ in terms of $\lambda_1(\O)$ and $u$ (cf. \cite[\S 1.4]{H} and \cite[Chapter 8]{Maz}):
$$
\lambda_1(\O_\e(x))=\left\{ \begin{array}{lll}
\displaystyle{\lambda_1(\O) + \frac{2\pi}{|\log\e|}\varphi^2(x) + o (1/|\log \e|)}\quad & \hbox{if }d=2
\medskip
\\
\displaystyle{\lambda_1(\O) +  \e^{d-2} (d-2) |S^{d-1}| \varphi^2(x) + o (\e^{d-2})}\quad &\hbox{if }d>2
\end{array}
\right.
$$
$$
u_{\e,x}(y)=\left\{ \begin{array}{lll}
\displaystyle{u(y)- \frac{2\pi}{|\log\e|} u(x) \phi_{x}(y) + o (1/|\log \e|)}\quad &\hbox{if }d=2
\medskip
\\
\displaystyle{u(y) -  \e^{d-2} (d-2) |S^{d-1}|u(x) \phi_{x}(y) + o (\e^{d-2})}\quad &\hbox{if }d>2\,,
\end{array}
\right.
$$
where $|S^{d-1}|$ is the measure of the $(d-1)$-sphere and $\phi_{x}(y)$ is the Green function of the Laplace operator vanishing on the boundary of $\O$.
In particular, choosing $x$ different from the maximum point $x_0$ of $u$ and evaluating $u_{\e,x}$ at $x_0$, we obtain
$$
M(\O_\e(x))\geq \left\{ \begin{array}{lll}
\displaystyle{M(\O)- \frac{2\pi}{|\log\e|} u(x) \phi_{x}(x_0) + o (r(\e))}\quad & \hbox{if }d=2
\medskip
\\
\displaystyle{M(\O) -  \e^{d-2} (d-2) |S^{d-1}|u(x) \phi_{x}(x_0) + o (r(\e))}\quad &\hbox{if }d>2\,,
\end{array}
\right.
$$
where $r(\e) =1/|\log \e|$ if $d=2$ and $\e^{d-2}$ otherwise.
In particular, we obtain the lower bound
\begin{equation}\label{absurd}
G(\O_\e(x)) \geq G(\O) + R(x) r(\e) + o (r(\e))\,,
\end{equation}
with
\begin{equation}\label{def-R}
R(x):= M(\O) \varphi^2(x) - \lambda_1(\O) u(x) \phi_{x_0}(x)
\end{equation}
(here we have used the symmetry of the Green function: $\phi_{x}(x_0)   = \phi_{x_0}(x) $).
To get a contradiction, it suffices to find a point $x$ in which $R(x)>0$. 
Taking $x$ close to the boundary, say $x = x_1 - \delta n(x_1)$ for some $x_1\in \partial \O$ and $0<\delta<<1$, and recalling that  $\varphi$ vanishes on $\partial \O$, we may write
$$
\varphi(x) = - \delta \frac{\partial \varphi}{\partial n}(x_1) + \frac{\delta^2}{2} \frac{\partial^2 \varphi}{\partial n^2}(x_1) + o(\delta^2)\,.
$$
Furthermore, by combining  \eqref{eigen} with the relation $\Delta \varphi = \Delta_{\partial \O} \varphi + H_{\partial \O} \frac{\partial \varphi }{\partial n } + \frac{\partial^2 \varphi}{\partial n^2}$ on $\partial \O$, we get
$$
\frac{\partial^2 \varphi}{\partial n^2}(x_1)= - H_{\partial \O}(x_1) \frac{\partial \varphi }{\partial n }(x_1)\,.
$$
Arguing in the same way for $u$ and $\phi_{x_0}$, we obtain the developments
\begin{align*}
u(x) & =  - \delta \frac{\partial u}{\partial n}(x_1) + \frac{\delta^2}{2} \frac{\partial^2 u}{\partial n^2}(x_1) + o(\delta^2)\,,
\\
\phi_{x_0}(x) & =  - \delta \frac{\partial \phi_{x_0}}{\partial n}(x_1) + \frac{\delta^2}{2} \frac{\partial^2 \phi_{x_0}}{\partial n^2}(x_1) + o(\delta^2)\,,
\end{align*}
and the equalities
\begin{align*}
\frac{\partial^2 u}{\partial n^2}(x_1) & = -1 - H_{\partial \O}(x_1) \frac{\partial u}{\partial n }(x_1)\,,
\\
\frac{\partial^2 \phi_{x_0}}{\partial n^2}(x_1) & = - H_{\partial \O}(x_1) \frac{\partial\phi_{x_0}}{\partial n }(x_1)\,.
\end{align*}
In view of these computations, we infer that 
$$
R(x)=   \big(\delta^2 + \delta^3 H_{\partial \O}(x_1)\big) \left[ M(\Omega) \Big(\frac{\partial \varphi}{\partial n}\Big)^2 -  \lambda_1(\Omega) \frac{\partial u}{\partial n}\,\frac{\partial \phi_{x_0}}{\partial n}\right](x_1)  -  \frac{\delta^3 }{2}\lambda_1(\O)\frac{\partial \phi_{x_0}}{\partial n}(x_1)+ o(\delta^3)\,.
$$
By optimality of $\O$, the equality \eqref{opt-cond} holds true at $x_1$, so that 
$$
R(x)=  -  \frac{\delta^3 }{2}\lambda_1(\O)\frac{\partial \phi_{x_0}}{\partial n}(x_1)+ o(\delta^3)\,.
$$
By the Hopf's principle $\frac{\partial \phi_{x_0}}{\partial n}$ is strictly negative on the boundary $\partial \O$, therefore
$R(x)$ is strictly positive. In particular, in view of \eqref{absurd} we conclude that, if $x$ is close enough to the boundary,
$$
G(\O_\e(x))> G(\O)\,,
$$
which is in contradiction with the maximality of $\O$.
\end{proof}

\medskip\noindent
Antoine Henrot, Institut \'Elie Cartan de Lorraine, UMR 7502, Universit\'e de Lorraine CNRS, email: antoine.henrot@univ-lorraine.fr \\
Ilaria Lucardesi, Institut \'Elie Cartan de Lorraine, UMR 7502, Universit\'e de Lorraine CNRS, email: ilaria.lucardesi@univ-lorraine.fr\\
G\'erard Philippin, D\'epartement de Math\'ematiques, Universit\'e Laval  Qu\'ebec, 
\\ email: gerard.philippin@mat.ulaval.ca


\begin{thebibliography}{99}

\bibitem{vdB}{\sc M.~van den Berg}: Estimates for the Torsion Function  and Sobolev Constants, {\it Potential Anal.} {\bf 36},
607--616 (2012)

\bibitem{vdBBB}{\sc M. van den Berg, G. Buttazzo, B. Velichkov}:
Optimization problems involving the first Dirichlet eigenvalue and the torsional rigidity, New trends in shape optimization, 
Internat. Ser. Numer. Math. {\bf 166}, pp. 19--41. Birkhäuser/Springer, Cham (2015) 

\bibitem{VC}{\sc M.~van den Berg, T.~Carroll}: Hardy inequality and $L^p$ estimates for the torsion function, {\it Bull. Lond. Math. Soc.} {\bf 41}, no. 6, 980--986 (2009)

\bibitem{Naples}{\sc M.~van den Berg, V.~Ferone, C.~Nitsch, C.~Trombetti}: On P\'olya's Inequality for Torsional Rigidity
and First Dirichlet Eigenvalue, {\it Integr. Equ. Oper. Theory} {\bf 86}, 579--600  (2016)

\bibitem{Bu-Bu} {\sc D. Bucur, G. Butazzo}: Variational methods in shape optimization problems,
Progress in Nonlinear Differential Equations and their Applications {\bf 65}. Birkh\"auser, Boston (2005)

\bibitem{Cio-Mur} {\sc D.~Cioranescu, F.~Murat}: A strange term coming from nowhere. Topics in the mathematical modelling of composite materials, Progr. Nonlinear Differential Equations Appl., 31, pp. 45--93. Birkhäuser, Boston (1997)

\bibitem{H}{\sc A.~Henrot}: Extremum problems for eigenvalues of elliptic operators. Birkh\"auser, Basel (2006)

\bibitem{HP}{\sc A.~Henrot, M.~Pierre}: Variation et Optimisation de Formes. Une Analyse
G\'eom\'etrique. Math\'ematiques \& Applications 48. Springer, Berlin (2005)


\bibitem{Kac-Mur}{\sc H.~Kacimi, F.~Murat}: {Estimation de l'erreur dans des probl\`emes de {D}irichlet o\`u apparait un terme \'etrange}, {Partial differential equations and the calculus of variations, {V}ol. {II}}, {Progr. Nonlinear Differential Equations Appl.},
    {\bf 2}, pp. 661--696. Birkh\"auser, Boston (1989)

\bibitem{Kor-Le}{\sc N.J.~Korevaar, J.L.~Lewis}: Convex solutions of certain elliptic equations have constant rank Hessians, {\it{ Arch. Rational Mech. Anal.}} {\bf {97}}, 19--32  (1987)

\bibitem{Ko} {\sc A.A.~Kosmodem'yanskii}: Sufficient conditions for the concavity of the solution of the Dirichlet problem for the equation $\Delta u = -1$, {\it Mat. Zametki} {\bf 42},  no. 4, 537--542 (1987)


\bibitem{Mak-Lim}{\sc L.G.~Makar-Limanov}: Solution of the Dirichlet's problem for the equation $\Delta u = -1$ in a convex region, {\it Math. Notes of the Akademy of Sciences of the USSR} {\bf 9},  52--53  (1971)

\bibitem{Maz}{\sc V.~Maz'ya, S.~Nazarov, B.~Plamenevskij}: Asymptotic theory of elliptic boundary value problems in singularly perturbed domains. Vol. 1. Birkh\"auser, Basel (2000)


\bibitem{Pay1} {\sc L. E. Payne}: Bounds for the maximum Stress in the St-Venant problem, {\it Indian Journal of Mechanics  and Mathematics}, special issue in honor of B. Sen, part 1, 51--59  (1968)

\bibitem{Pay3}{\sc L. E.~Payne}: Bounds for solutions of a class of quasilinear elliptic boundary value problems in terms of the torsion function, {\it {Proc. Roy. Soc. Edinburgh Sect. A}} {\bf {88}},  251--265 (1981) %DOI = {10.1017/S0308210500020102},

\bibitem{Pay2} {\sc L.E.~Payne}: Some special maximum principles with applications to isoperimetric inequalities, Maximum principles and eigenvalue problems in partial differential equations, {\it Pitman Research Notes in Math.} {\bf 175}, (P. W. Schaefer, Ed.), Longman, 15-33 (1988)
    
\bibitem{Pay-Phi2} {\sc L.E.~Payne, G.A.~Philippin}: Some remarks on the problems of elastic torsion an of torsional creep, Some Aspects of Mechanics of Continua, part 1, Jadavpur University, 32--40  (1977)
    
\bibitem{Pay-Phi} {\sc L.E.~Payne, G.A.~Philippin}: Isoperimetric inequalities in the torsion and clamped membrane problems for convex plane domains, {\it SIAM J. Math. Anal.} {\bf 14}, 1154--1162 (1983)

\bibitem{Phi-Po} {\sc G.A.~Philippin, G.~Porru}: Isoperimetric inequalities and overdetermined problems for the Saint-Venant equation, {\it New Zealand J. Math.} {\bf 25}, no. 2, 217--227 (1996)

\bibitem{Phi-Saf} {\sc G.A.~Philippin, A.~Safoui}: On extending some maximum principles to convex domains with nonsmooth boundaries, {\it Math. Methods Appl. Sci.} {\bf 33}, no. 15, 1850--1855  (2010)

\bibitem{Vel}{\sc B.~Velichkov}: Existence and regularity results for some shape optimization problems. Tesi. Scuola Normale Superiore di Pisa (Nuova Serie) \textbf{19}. Edizioni della Normale, Pisa (2015)

\bibitem{V}{\sc H.~Vogt}: $L^\infty$-estimates for the torsion function and $L^\infty$-growth of semigroups satisfying Gaussian bounds, \url{https://arxiv.org/pdf/1611.03676.pdf}

\end{thebibliography}
\end{document}